\newtheorem{theorem}{Theorem}[section]
\newtheorem{lemma}{Lemma}[section]
\newtheorem{remark}{Remark}[section]
\newtheorem{scheme}{Scheme}
\newtheorem{example}{Example}
\newtheorem{hypothesis}{Hypothesis}
\begin{document}

\title{A weak Local Linearization scheme for stochastic differential
equations with multiplicative noise}

\author{J.C. Jimenez \thanks{%
Instituto de Cibern\'{e}tica, Matem\'{a}tica y F\'{i}sica. La
Habana, Cuba. email: jcarlos@icimaf.cu},
C. Mora\thanks{%
Departamento de Ingenier\'{\i}a Matem\'{a}tica and CI$^2$MA,
Universidad de Concepci\'{o}n, Chile. email: cmora@ing-mat.udec.cl},
M. Selva\thanks{%
Departamento de Ingener\'{\i}a Matem\'{a}tica, Universidad de
Concepci\'{o}n, Chile. email: selva@ing-mat.udec.cl}}

\maketitle

\begin{abstract}
In this paper, a weak Local Linearization scheme for Stochastic
Differential Equations (SDEs) with multiplicative noise is
introduced. First, for a time discretization, the solution of the
SDE is locally approximated by the solution of the piecewise linear
SDE that results from the Local Linearization strategy. The weak
numerical scheme is then defined as a sequence of random vectors
whose first moments coincide with those of the piecewise linear SDE
on the time discretization. The rate of convergence is derived and
numerical simulations are presented for illustrating the performance
of the scheme.
\end{abstract}

\section{Introduction}

During 30 years the class of local linearization integrators has
been developed for different types of deterministic and random
differential equations. The essential principle of such integration
methods is the piecewise linearization of the given differential
equation to obtain consecutive linear equations that are explicitly
solved at each time step. This general approach has worked well for
the classes of ordinary, delay, random and stochastic differential
equations with additive noise. Key element of such success is the
use of explicit solutions or suitable approximations for the
resulting linear differential equations. Precisely, the absence of
explicit solution or adequate approximation for linear Stochastic
Differential Equations (SDEs) with multiplicative noise is the main
reason of the limited application of the Local Linearization
approach to nonlinear SDEs with multiplicative noise. For these
equations, the available local linearization integrators are of two
types: the introduced in \cite{Biscay 1996} for scalar equations and
the considered in \cite{Mora2005, Shoji11, Stramer99}. The former
uses the explicit solution of the scalar linear equations with
multiplicative noise, while the latter employs the solution of the
linear equation with additive noise that locally approximates the
nonlinear equation.

Directly related to the development of the local linearization
integrators is the concept of Local Linear approximations (see,
e.g., \cite{Jimenez02 SAA,Jimenez05 AMC,Jimenez03 IJC}). These
approximations to the solution of the differential equations are
defined as the continuous time solution of the piecewise linear
equations associated to the Local Linearization method. These
continuous approximations have played a fundamental role for
studying the convergence, stability and dynamics of the local
linearization integrators for all the classes of differential
equations mentioned above with the exception of the SDEs with
multiplicative noise. For this last class of equations, the Local
Linear approximations have only been used for constructing piecewise
approximations to the mean and variance of the states in the
framework of continuous-discrete filtering problems (see
\cite{Jimenez03 IJC}).

The purpose of this work is to construct a weak Local Linearization
integrator for SDEs with multiplicative noise based on suitable weak
approximation to the solution of piecewise linear SDEs with
multiplicative noise. For this, we cross two ideas: 1) as in
\cite{Jimenez03 IJC}, the use of the Local Linear approximations for
constructing piecewise approximations to the mean and variance of
the SDEs with multiplicative noise; and 2) as in \cite
{Carbonell06}, at each integration step, the generation of a random
vector with the mean and variance of the Local Linear approximation
at this integration time. For implementing this, new formulas
recently obtained in \cite{Jimenez 2012} for the mean and variance
of the solution of linear SDEs with multiplicative noise are used,
which are computationally more efficient than those formerly
proposed in \cite {Jimenez02 SCL,Jimenez03 IJC}. Notice that this
integration approach is conceptually different to that usually
employed for designing weak integrators for SDEs. Typically, these
integrators are derived from a truncated Ito-Taylor expansion of the
equation's solution at each integration step, and include the
generation of random variables with moments equal to those of the
involved multiple Ito integrals \cite{Kloeden 1995,Milstein04}.

The paper is organized as follows. After some basic notations in
Section 2, the new Local Linearization integrator is introduced in
Section 3. Its rate of convergence is derived in Section 4 and, in
the last section, numerical simulations are presented in order to
illustrate the performance of the numerical integrator.

\section{Basic notations}

Let us consider the SDE with multiplicative noise
\begin{equation}
X_{t}=X_{t_{0}}+\int_{t_{0}}^{t}f\left( s,X_{s}\right)
ds+\sum_{k=1}^{m}\int_{t_{0}}^{t}g^{k}\left( s,X_{s}\right)
dW_{s}^{k},\quad \quad \forall t\in \lbrack t_{0},T],
\label{eq:1.1}
\end{equation}%
where $f,g^{k}:\left[ t_{0},T\right] \times
\mathbb{R}^{d}\rightarrow \mathbb{R}^{d}$ are smooth functions,
$W^{1},\ldots ,W^{m}$ are independent
Wiener processes on a filtered complete probability space $\left( \Omega ,%
\mathfrak{F},\left( \mathfrak{F}_{t}\right) _{t\geq
t_{0}},\mathbb{P}\right) $, and $X_{t}$ is an adapted
$\mathbb{R}^{d}$-valued stochastic process. In addition, let us
assume the usual conditions for the existence and uniqueness of a
weak solution of (\ref{eq:1.1}) with bounded moments (see, e.g.,
\cite{Kloeden 1995}).

Throughout this paper, we consider the time discretization
$t_{0}=\tau _{0}<\tau _{1}<\dots <\tau _{N}=T$ with $\tau
_{n+1}-\tau _{n}\leq \Delta $ for all $n=0,\ldots ,N-1$ and $\Delta
>0$. We use the same symbol $K\left( \cdot \right) $ (resp., $K$)
for different positive increasing functions (resp., positive real
numbers) having the common property to be independent of $\left(
\tau _{k}\right) _{k=0,\ldots ,N}$. Moreover, $A^{\mathbf{\top }} $
stands for the transpose of the matrix $A$, and $\left\vert \cdot
\right\vert $ denotes the Euclidean norm for vectors or the
Frobenious norm
for matrices. By $\mathcal{C}_{P}^{\ell }\left( \mathbb{R}^{d},\mathbb{R}%
\right) $ we mean the collection of all $\ell $-times continuously
differentiable functions $g:\mathbb{R}^{d}\rightarrow \mathbb{R}$ such that $%
g$ and all its partial derivatives of orders $1,2,\ldots ,\ell $
have at most polynomial growth.

\section{Numerical method}

Suppose that $z_{n}\approx X_{\tau _{n}}$ with $n=0,\ldots ,N-1$. Set $%
g^{0}=f$. Taking the first-order Taylor expansion of $g^{k}$ yields
\begin{equation*}
g^{k}\left( t,x\right) \approx g^{k}\left( \tau _{n},z_{n}\right) +\frac{%
\partial g^{k}}{\partial x}\left( \tau _{n},z_{n}\right) \left(
x-z_{n}\right) +\frac{\partial g^{k}}{\partial t}\left( \tau
_{n},z_{n}\right) \left( t-\tau _{n}\right)
\end{equation*}%
whenever $x\approx z_{n}$ and $t\approx \tau _{n}$. Therefore
\begin{equation*}
X_{t}\approx z_{n}+\sum_{k=0}^{m}\int_{\tau _{n}}^{t}\left(
B_{n}^{k}X_{s}+b_{n}^{k}\left( s\right) \right) dW_{s}^{k}\quad
\quad \forall t\in \left[ \tau _{n},\tau _{n+1}\right] ,
\end{equation*}%
with $W_{s}^{0}=s$, $B_{n}^{k}=\dfrac{\partial g^{k}}{\partial
x}\left( \tau _{n},z_{n}\right) $ and
\begin{equation}
b_{n}^{k}\left( s\right) =g^{k}\left( \tau _{n},z_{n}\right)
-\frac{\partial
g^{k}}{\partial x}\left( \tau _{n},z_{n}\right) z_{n}+\frac{\partial g^{k}}{%
\partial t}\left( \tau _{n},z_{n}\right) \left( s-\tau _{n}\right) .
\label{eq:1.9}
\end{equation}%
This follows that, for all $t \in \left[ \tau
_{n},\tau_{n+1}\right]$, $X_t$ can be approximated by
\begin{equation}\label{eq:1.2}
Y_{t}=z_{n}+\sum_{k=0}^{m}\int_{\tau _{n}}^{t}
\left(B_{n}^{k}Y_{s}+b_{n}^{k}\left(s\right) \right) dW_{s}^{k},\hspace{2cm}%
\forall t\in \left[ \tau _{n},\tau _{n+1}\right],
\end{equation}%
which is the first order Local Linear approximation of $X_{t}$ used
in \cite{Jimenez03 IJC}. Hence, $\mathbb{E}\phi \left(
X_{\tau_{n+1}}\right) \approx \mathbb{E}\phi \left( Y_{\tau
_{n+1}}\right)$ for any smooth function $\phi$, and so $X_{\tau
_{n+1}}$ might be weakly approximated by a random variable $z_{n+1}$
such that the first moments of $z_{n+1}-z_{n}$ be similar to those
of $Y_{\tau _{n+1}}-z_{n}$. This leads us to the following Local
Linearization scheme.
\begin{scheme}
\label{scheme:MM1} Let $\eta _{0}^{1},\ldots ,\eta _{0}^{m},\cdots
,\eta _{N-1}^{1},\ldots ,\eta _{N-1}^{m}$ be i.i.d. symmetric random
variables having variance $1$ and finite moments of any order. For a
given $z_{0}$, we define recursively $\left( z_{n}\right)
_{n=0,\ldots ,N}$ by
\begin{equation}
z_{n+1}=\mu _{n}\left( \tau _{n+1}\right) +\sqrt{\sigma _{n}\left(
\tau _{n+1}\right) -\mu _{n}\left( \tau _{n+1}\right) \mu
_{n}^{\intercal }\left( \tau _{n+1}\right) }\,\eta _{n},
\label{eq:1.6}
\end{equation}%
where $\eta _{n}=\left( \eta _{n}^{1},\ldots ,\eta _{n}^{m}\right)
^{\top }$ and $\mu _{n}\left( t\right) $, $\sigma _{n}\left(
t\right) $ satisfy the linear differential equations
\begin{equation}
\mu _{n}\left( t\right) =z_{n}+\int_{\tau _{n}}^{t}\left(
B_{n}^{0}\,\mu _{n}\left( s\right) +b_{n}^{0}\left( s\right) \right)
ds\quad \quad \forall t\in \left[ \tau _{n},\tau _{n+1}\right] ,
\label{eq:1.3}
\end{equation}%
\begin{equation}
\sigma _{n}\left( t\right) =z_{n}z_{n}^{\top }+\int_{\tau _{n}}^{t}\mathcal{L%
}_{n}\left( s,\sigma _{n}\left( s\right) \right) ds\quad \quad \forall t\in %
\left[ \tau _{n},\tau _{n+1}\right] .  \label{eq:1.4}
\end{equation}%
Here
\begin{align*}
\mathcal{L}_{n}\left( s,\sigma \right) & =\sigma \left(
B_{n}^{0}\right) ^{\top }+B_{n}^{0}\,\sigma ^{\intercal }+\mu
_{n}\left( s\right) (b_{n}^{0}\left( s\right) )^{\top
}+b_{n}^{0}\left( s\right) \mu
_{n}^{\intercal }\left( s\right) \\
& \quad +\sum_{k=1}^{m}\left( B_{n}^{k}\,\sigma \left(
B_{n}^{k}\right) ^{\top }+B_{n}^{k}\,\mu _{n}\left( s\right)
(b_{n}^{k}\left( s\right) )^{\top }+b_{n}^{k}\left( s\right) \mu
_{n}^{\intercal }\left( s\right) \left( B_{n}^{k}\right) ^{\top
}+b_{n}^{k}\left( s\right) (b_{n}^{k}\left( s\right) )^{\top
}\right) .
\end{align*}
\end{scheme}

\begin{remark}
From (\ref{eq:1.3}) it follows that $\mu _{n}\left( \tau
_{n+1}\right) $ is the expected valued of $Y_{\tau _{n+1}}$ given
$Y_{\tau _{n}}=z_{n}$. Moreover, (\ref{eq:1.4}) implies
\begin{equation*}
\sigma _{n}\left( \tau _{n+1}\right) =\mathbb{E}\left( Y_{\tau
_{n+1}}Y_{\tau _{n+1}}^{\top }\diagup Y_{\tau _{n}}=z_{n}\right) .
\end{equation*}
\end{remark}

\begin{remark}
By construction, Scheme \ref{scheme:MM1} preserves the mean-square
stability property that the solution of the linear equation $dX_{t}=
\sum_{k=0}^{m}(B^{k}X_{t}+b^{k,1}t+b^{k,0})dW_{t}^{k}$ might have.
For instance, if the trivial solution of the homogenous equation $
dX_{t}=\sum_{k=0}^{m}B^{k}X_{t}dW_{t}^{k}$ is mean-square
asymptotically stable, Scheme \ref{scheme:MM1} inherits this
property.
\end{remark}

\begin{remark}
A key point in the implementation of Scheme \ref{scheme:MM1} is the
evaluation of just one matrix exponential for computing $\mu
_{n}\left( \tau _{n+1}\right) $ and $\sigma _{n}\left( \tau
_{n+1}\right) $ at each time step. Indeed, from Theorem 2 in
\cite{Jimenez 2012},
\begin{equation}
\mu _{n}\left( \tau _{n+1}\right) =z_{n}+\mathcal{L}_{2}e^{\mathcal{M}%
_{n}(\tau _{n+1}-\tau _{n})}u_{n}  \label{eq:1.7}
\end{equation}%
and%
\begin{equation}
vec(\sigma _{n}\left( \tau _{n+1}\right) )=\mathcal{L}_{1}e^{\mathcal{M}%
_{n}(\tau _{n+1}-\tau _{n})}u_{n},  \label{eq:1.8}
\end{equation}%
where the matrices $\mathcal{M}_{n}$, $\mathcal{L}_{1}$,
$\mathcal{L}_{2}$\ and the vector $u_{n}$ are given by
\begin{equation*}
\mathcal{M}_{n}=\left[
\begin{array}{cccccc}
\mathcal{A} & \mathcal{B}_{5} & \mathcal{B}_{4} & \mathcal{B}_{3} & \mathcal{%
B}_{2} & \mathcal{B}_{1} \\
\mathbf{0} & \mathcal{C} & \mathcal{I}_{d+2} & 0 & 0 & 0 \\
\mathbf{0} & 0 & \mathcal{C} & 0 & 0 & 0 \\
\mathbf{0} & 0 & 0 & 0 & 2 & 0 \\
\mathbf{0} & 0 & 0 & 0 & 0 & 1 \\
\mathbf{0} & \mathbf{0} & \mathbf{0} & 0 & 0 & 0%
\end{array}%
\right] ,\text{ }u_{n}=\left[
\begin{array}{c}
vec(z_{n}z_{n}^{\intercal }) \\
0 \\
r \\
0 \\
0 \\
1%
\end{array}%
\right] \in \mathbb{R}^{d^{2}+2d+7},
\end{equation*}%
$\mathcal{L}_{2}=\left[
\begin{array}{ccc}
0_{d\times (d^{2}+d+2)} & \mathcal{I}_{d} & 0_{d\times 5}%
\end{array}%
\right] $ and $\mathcal{L}_{1}=\left[
\begin{array}{cc}
\mathcal{I}_{d^{2}} & 0_{d^{2}\times (2d+7)}%
\end{array}%
\right] $, with matrices $\mathcal{A}$, $\mathcal{B}_{i}$,
$\mathcal{C}$ and $r$ defined by
\begin{equation*}
\mathcal{A}=B_{n}^{0}\oplus
B_{n}^{0}+\sum\limits_{k=1}^{m}B_{n}^{k}\otimes
(B_{n}^{k})^{\intercal },\text{ \ \ \ }\mathcal{C}=\left[
\begin{array}{ccc}
B_{n}^{0} & b_{n}^{0,1} & B_{n}^{0}z_{n}+b_{n}^{0,0} \\
0 & 0 & 1 \\
0 & 0 & 0%
\end{array}%
\right] ,\text{\ \ \ \ \ }r=\left[
\begin{array}{c}
\mathbf{0}_{(d+1)\times 1} \\
1%
\end{array}%
\right] ,
\end{equation*}%
$\mathcal{B}_{1}=vec(\beta _{1})+\beta _{4}z_{n}$, $\mathcal{B}%
_{2}=vec(\beta _{2})+\beta _{5}z_{n}$, $\mathcal{B}_{3}=vec(\beta _{3})$, $%
\mathcal{B}_{4}=\beta _{4}\mathcal{L}$, and $\mathcal{B}_{5}=\beta _{5}%
\mathcal{L}$. Here $\mathcal{L}=\left[
\begin{array}{ll}
\mathcal{I}_{d} & 0_{d\times 2}%
\end{array}%
\right] $ and
\begin{gather*}
\beta _{1}=\sum\limits_{k=1}^{m}b_{n}^{k,0}(b_{n}^{k,0})^{\intercal
},\text{ \ \ \ \ }\beta
_{2}=\sum\limits_{k=1}^{m}b_{n}^{k,0}(b_{n}^{k,1})^{\intercal
}+b_{n}^{k,1}(b_{n}^{k,0})^{\intercal },\text{ \ \ \ \ \ }\beta
_{3}=\sum\limits_{k=1}^{m}b_{n}^{k,1}(b_{n}^{k,1})^{\intercal },\text{ } \\
\text{\ \ \ }\beta _{4}=b_{n}^{0,0}\oplus
b_{n}^{0,0}+\sum\limits_{k=1}^{m}b_{n}^{k,0}\otimes
B_{n}^{k}+B_{n}^{k}\otimes b_{n}^{k,0},\text{ \ \ }\beta
_{5}=b_{n}^{0,1}\oplus
b_{n}^{0,1}+\sum\limits_{k=1}^{m}b_{n}^{k,1}\otimes
B_{n}^{k}+B_{n}^{k}\otimes b_{n}^{k,1},
\end{gather*}%
being $b_{n}^{k,0}$ and $b_{n}^{k,1}$ defined via (\ref{eq:1.9}) as
$ b_{n}^{k,0}+b_{n}^{k,1}\left( s-\tau _{n}\right) =b_{n}^{k}\left(
s\right) $. The symbols $vec$, $\oplus $ and $\otimes $ denote the
vectorization operator, the Kronecker sum and the Kronecker product,
respectively. $\mathcal{I}_{d}$ is the $d-$dimensional identity
matrix. The matrix exponential in (\ref{eq:1.7}) and (\ref{eq:1.8})
can be efficiently computed via the Pad\'{e} method with scaling and
squaring strategy or via the Krylov subspace method in the case of
large system of SDEs (see, e.g., \cite{Moler03}). For autonomous
equations or for equations with additive noise, the exponential
matrix in (\ref{eq:1.7}) and (\ref{eq:1.8}) reduces to simpler forms
\cite{Jimenez 2012}.
\end{remark}

\begin{remark}
For SDEs with additive noise, Scheme \ref{scheme:MM1} reduces to the
weak order-$1$ Local Linearization scheme introduced in
\cite{Carbonell06}.
\end{remark}

\section{Rate of convergence}

Next theorem establishes the linear rate of weak convergence of
Scheme \ref{scheme:MM1} when the drift and diffusion coefficients
are smooth enough.

\begin{hypothesis}
\label{hyp:Hip1} For any $k=0,\ldots ,m$ we have $g^{k}\in \mathcal{C}%
_{P}^{4}\left( \left[ t_{0},T\right] \times \mathbb{R}^{d},\mathbb{R}%
^{d}\right) $. Moreover,
\begin{equation}\label{eq:1.5}
\left\vert g^{k}\left(t,x\right)\right\vert \leq K\left(1+\left\vert
x\right\vert \right) \qquad \mbox{ and } \qquad \left\vert
\dfrac{\partial g^{k}}{\partial t}\left(t,x\right)\right\vert +
\left\vert \dfrac{\partial g^{k}}{\partial x}\left(t,x\right)
\right\vert \leq K
\end{equation}%
for all $t\in \left[ t_{0},T\right] $ and $x\in \mathbb{R}^{d}$.
\end{hypothesis}

\begin{theorem}
\label{th:convScheme} In addition to Hypothesis \ref{hyp:Hip1},
suppose that $X_{t_{0}}$ has finite moments of any order and that
for all $\phi \in \mathcal{C}_{P}^{4}\left(
\mathbb{R}^{d},\mathbb{R}\right) $,
\begin{equation*}
\left\vert \mathbb{E}\phi \left( X_{t_{0}}\right) -\mathbb{E}\phi
\left( z_{0}\right) \right\vert \leq K\Delta .
\end{equation*}%
Then, for all $\phi \in \mathcal{C}_{P}^{4}\left( \mathbb{R}^{d},\mathbb{R}%
\right) $,
\begin{equation*}
\left\vert \mathbb{E}\phi \left( X_{T}\right) -\mathbb{E}\phi \left(
z_{N}\right) \right\vert \leq K\left( T\right) \Delta ,
\end{equation*}%
where $z_{N}$ is given by Scheme \ref{scheme:MM1}.
\end{theorem}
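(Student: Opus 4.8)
The plan is to follow the classical weak-error strategy based on the backward Kolmogorov equation, adapted to the two-stage structure of Scheme~\ref{scheme:MM1}. Introduce the function $u(t,x)=\mathbb{E}\left(\phi(X_T)\diagup X_t=x\right)$ and let $\mathcal{G}_s$ denote the infinitesimal generator of (\ref{eq:1.1}), so that $u$ solves $\partial_s u+\mathcal{G}_s u=0$ with $u(T,\cdot)=\phi$. Under Hypothesis~\ref{hyp:Hip1} and the moment assumption on $X_{t_0}$, standard regularity results for this equation (see, e.g., \cite{Kloeden 1995,Milstein04}) provide that $u(s,\cdot)\in\mathcal{C}_P^4\left(\mathbb{R}^d,\mathbb{R}\right)$, with polynomial-growth bounds on $u$ and its spatial derivatives up to order $4$ that are uniform in $s\in[t_0,T]$. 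I would state this as a preliminary lemma, since every subsequent estimate rests on it.

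The global error is then telescoped. Writing $\mathbb{E}\phi(X_T)=\mathbb{E}u(t_0,X_{t_0})$ and $\mathbb{E}\phi(z_N)=\mathbb{E}u(T,z_N)$, I would split
\begin{equation*}
\mathbb{E}\phi(X_T)-\mathbb{E}\phi(z_N)=\left(\mathbb{E}u(t_0,X_{t_0})-\mathbb{E}u(t_0,z_0)\right)+\sum_{n=0}^{N-1}\left(\mathbb{E}u(\tau_n,z_n)-\mathbb{E}u(\tau_{n+1},z_{n+1})\right).
\end{equation*}
The first term is bounded by $K\Delta$ by applying the hypothesis on $z_0$ to the test function $u(t_0,\cdot)\in\mathcal{C}_P^4$. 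For each summand, let $X^{\tau_n,z_n}$ and $Y^{\tau_n,z_n}$ denote the solutions of (\ref{eq:1.1}) and of the linear equation (\ref{eq:1.2}) started at $z_n$ at time $\tau_n$. By the Markov property $\mathbb{E}u(\tau_n,z_n)=\mathbb{E}u(\tau_{n+1},X^{\tau_n,z_n}_{\tau_{n+1}})$, so the one-step error decomposes as
\begin{equation*}
\mathbb{E}u(\tau_n,z_n)-\mathbb{E}u(\tau_{n+1},z_{n+1})=\mathbb{E}\left(u(\tau_{n+1},X^{\tau_n,z_n}_{\tau_{n+1}})-u(\tau_{n+1},Y^{\tau_n,z_n}_{\tau_{n+1}})\right)+\mathbb{E}\left(u(\tau_{n+1},Y^{\tau_n,z_n}_{\tau_{n+1}})-u(\tau_{n+1},z_{n+1})\right),
\end{equation*}
a linearization error $e_n^{(1)}$ plus a moment-matching error $e_n^{(2)}$. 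A separate lemma establishing the moment stability $\sup_{0\le n\le N}\mathbb{E}|z_n|^{2p}\le K(T)$, via the linear growth of $\mu_n(\tau_{n+1})$ and $\sigma_n(\tau_{n+1})$ in $z_n$ and a discrete Gronwall argument, would let me absorb all polynomial-growth factors uniformly.

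For $e_n^{(1)}$ I would apply It\^o's formula to $s\mapsto u(s,Y^{\tau_n,z_n}_s)$ on $[\tau_n,\tau_{n+1}]$ and use $\partial_s u+\mathcal{G}_s u=0$ to get $e_n^{(1)}=-\mathbb{E}\int_{\tau_n}^{\tau_{n+1}}(\mathcal{G}^{Y}_s-\mathcal{G}_s)u(s,Y_s)\,ds$, where $\mathcal{G}^Y_s$ is the generator of (\ref{eq:1.2}). Since the coefficients of (\ref{eq:1.2}) are exactly the first-order Taylor expansions of $f,g^k$ about $(\tau_n,z_n)$, the difference $(\mathcal{G}^Y_s-\mathcal{G}_s)u$ is controlled by the Taylor remainders, which are $O(|Y_s-z_n|^2+(s-\tau_n)^2)$ by $g^k\in\mathcal{C}_P^4$; combined with the bound on $D^2u$, the estimate $\mathbb{E}(|Y_s-z_n|^2\diagup\mathfrak{F}_{\tau_n})\le K(1+|z_n|^2)(s-\tau_n)$, and integration over an interval of length $\tau_{n+1}-\tau_n$, this gives $|e_n^{(1)}|\le K(T)(1+|z_n|^q)(\tau_{n+1}-\tau_n)^2$. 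For $e_n^{(2)}$ I would Taylor-expand $u(\tau_{n+1},\cdot)$ to third order about the common conditional mean $\mu_n(\tau_{n+1})$ with fourth-order remainder. By the two Remarks, $Y^{\tau_n,z_n}_{\tau_{n+1}}$ and $z_{n+1}$ share the same conditional first and second moments, so the terms of order $\le 2$ cancel identically; the order-$3$ terms vanish for $z_{n+1}$ because the $\eta_n$ are symmetric, while for $Y_{\tau_{n+1}}$ the martingale structure forces the third central moment to be $O((\tau_{n+1}-\tau_n)^2)$ rather than $O((\tau_{n+1}-\tau_n)^{3/2})$; and the fourth-order remainder is $O((\tau_{n+1}-\tau_n)^2)$ using the finite moments of $\eta_n$ and $D^4u\in\mathcal{C}_P$. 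Hence $|e_n^{(2)}|\le K(T)(1+|z_n|^q)(\tau_{n+1}-\tau_n)^2$ as well.

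Finally I would sum the one-step bounds. Writing $(\tau_{n+1}-\tau_n)^2\le\Delta\,(\tau_{n+1}-\tau_n)$ and using $\sum_{n=0}^{N-1}(\tau_{n+1}-\tau_n)=T-t_0$ together with the uniform moment bound, the accumulated error is at most $K(T)\Delta\sum_n(\tau_{n+1}-\tau_n)=K(T)(T-t_0)\Delta$, which with the initial term yields the claim. The two places I expect to demand real work are the regularity lemma for $u$ with uniform polynomial-growth derivative bounds (the engine behind every estimate), and, more specifically to this scheme, the proof that the third central moment of $Y^{\tau_n,z_n}_{\tau_{n+1}}$ is $O((\tau_{n+1}-\tau_n)^2)$: this cancellation, paired with the symmetry of the $\eta_n$, is exactly what upgrades the local error from order $3/2$ to order $2$, and hence the global weak order from $1/2$ to $1$.
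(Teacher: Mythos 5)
Your argument is correct, but it takes a genuinely different route from the paper. The paper does not run the backward-Kolmogorov/telescoping machinery itself: it invokes Theorem 14.5.2 of \cite{Kloeden 1995} as a black box and only verifies its hypotheses, namely the uniform moment bounds for the scheme (Lemma \ref{lem:CotaCrecimiento}) and the fact that the first three conditional moments of the increment $z_{n+1}-z_{n}$ agree, up to $O\left(\left(\tau_{n+1}-\tau_{n}\right)^{2}\right)$ errors with polynomial growth in $z_{n}$, with those of the simplified weak Euler increment $\chi_{n+1}=f\left(\tau_{n},z_{n}\right)\left(\tau_{n+1}-\tau_{n}\right)+\sum_{k}g^{k}\left(\tau_{n},z_{n}\right)\Delta W^{k}$ (Lemma \ref{lem:CompMomentos}). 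So the paper's intermediate comparison object is the Euler increment $\chi_{n+1}$, whereas yours is the local linear solution $Y_{\tau_{n+1}}$; your two error sources $e_{n}^{(1)}$ (generator/Taylor-remainder comparison of $X$ and $Y$ via It\^{o}) and $e_{n}^{(2)}$ (moment matching of $z_{n+1}$ against $Y_{\tau_{n+1}}$, using the symmetry of $\eta_{n}$ and the $O(\Delta^{2})$ third central moment of the martingale part) are in effect fused in the paper into a single direct computation of $\mu_{n}\left(\tau_{n+1}\right)$, $\sigma_{n}\left(\tau_{n+1}\right)$ and $\widetilde{\sigma}_{n}\left(\tau_{n+1}\right)$ against the moments of $\chi_{n+1}$. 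What the paper's route buys is brevity: the regularity of $u\left(t,x\right)=\mathbb{E}\left(\phi\left(X_{T}\right)\diagup X_{t}=x\right)$ with uniform $\mathcal{C}_{P}^{4}$ bounds --- which you correctly identify as the engine of your proof and which is nontrivial to establish --- is delegated to the cited theorem, as is the telescoping and the absorption of polynomial-growth factors. What your route buys is a self-contained argument that makes transparent why exact matching of the first two conditional moments of $Y$ plus symmetry of $\eta_{n}$ yields weak order one, at the price of re-proving the general machinery; note that you still need the paper's Lemma \ref{lem:CotaCrecimiento} essentially verbatim (you acknowledge this), together with its intermediate estimates (\ref{eq:2.2}), (\ref{eq:2.3}) and (\ref{eq:2.13}) on $\mu_{n}$, $\sigma_{n}$ and $\widetilde{\sigma}_{n}$, to carry out your one-step bounds. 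Both decompositions are sound and both single out the same two facts as decisive: the $O(\Delta^{2})$ size of third moments and the uniform moment stability of the iterates.
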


Theorem \ref{th:convScheme} is a straightforward result of Theorem
14.5.2 in \cite{Kloeden 1995} and the two following Lemmata.

\begin{lemma}
\label{lem:CotaCrecimiento} Under the assumptions of Theorem \ref%
{th:convScheme}, for any $q\geq 1$ we have
\begin{equation}
\mathbb{E}\left( \max_{n=0,\ldots ,N}\left\vert z_{n}\right\vert
^{2q}\right) \leq K\left( T\right) \left( 1+\mathbb{E}\left(
\left\vert z_{0}\right\vert ^{2q}\right) \right)  \label{eq:2.6}
\end{equation}%
and
\begin{equation}
\mathbb{E}\left( \left\vert z_{n+1}-z_{n}\right\vert ^{2q}\diagup \mathfrak{F%
}_{\tau _{n}}\right) \leq K\left( T\right) \left( \tau _{k+1}-\tau
_{k}\right) ^{q}\left( 1+\left\vert z_{n}\right\vert ^{2q}\right)
\label{eq:2.7}
\end{equation}%
for all $n=0,\ldots ,N-1$.
\end{lemma}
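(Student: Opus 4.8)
The plan is to establish the one-step estimate (\ref{eq:2.7}) by splitting the increment $z_{n+1}-z_n$ into its $\mathfrak{F}_{\tau_n}$-conditional mean and a conditionally centered part, and then to upgrade this into the uniform bound (\ref{eq:2.6}) through a martingale decomposition combined with the discrete Gronwall inequality. First I would record the pointwise bounds that Hypothesis \ref{hyp:Hip1} supplies on $[\tau_n,\tau_{n+1}]$. Since $B_n^k=\frac{\partial g^k}{\partial x}(\tau_n,z_n)$, the second inequality in (\ref{eq:1.5}) gives $|B_n^k|\le K$, and combining the linear growth of $g^k$ with the boundedness of its derivatives in (\ref{eq:1.9}) yields $|b_n^k(s)|\le K(1+|z_n|)$ for all $s\in[\tau_n,\tau_{n+1}]$ (the factor $\tau_{n+1}-\tau_n\le\Delta$ is absorbed into $K$). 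Applying Gronwall's inequality to the linear ODE (\ref{eq:1.3}) gives $|\mu_n(t)|\le K(1+|z_n|)$, and integrating its drift over $[\tau_n,\tau_{n+1}]$ produces the conditional-mean estimate $|A_n|\le K(\tau_{n+1}-\tau_n)(1+|z_n|)$, where $A_n:=\mu_n(\tau_{n+1})-z_n=\mathbb{E}(z_{n+1}-z_n\mid\mathfrak{F}_{\tau_n})$, the last identity holding because $\eta_n$ is centered and independent of $\mathfrak{F}_{\tau_n}$.

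The crux is the estimate on the conditional covariance $P_n(t):=\sigma_n(t)-\mu_n(t)\mu_n^\top(t)$, whose trace at $\tau_{n+1}$ equals $|S_n|^2$ with $S_n:=\sqrt{\sigma_n(\tau_{n+1})-\mu_n(\tau_{n+1})\mu_n^\top(\tau_{n+1})}$. Differentiating $P_n$ and substituting (\ref{eq:1.3})--(\ref{eq:1.4}) together with the definition of $\mathcal{L}_n$, the terms carrying $\mu_n$ cancel and one is left with the linear matrix ODE
\begin{equation*}
\dot{P}_n=B_n^0P_n+P_n(B_n^0)^\top+\sum_{k=1}^m\Big(B_n^kP_n(B_n^k)^\top+(B_n^k\mu_n+b_n^k)(B_n^k\mu_n+b_n^k)^\top\Big),\qquad P_n(\tau_n)=0.
\end{equation*}
The source term is positive semidefinite and bounded in Frobenius norm by $K(1+|z_n|)^2$ thanks to the bounds above, so Gronwall's inequality applied to $|P_n(t)|$ with vanishing initial value gives $|P_n(\tau_{n+1})|\le K(\tau_{n+1}-\tau_n)(1+|z_n|)^2$. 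As $P_n(\tau_{n+1})$ is symmetric and positive semidefinite, $|S_n|^2=\mathrm{tr}\,P_n(\tau_{n+1})\le\sqrt{d}\,|P_n(\tau_{n+1})|\le K(\tau_{n+1}-\tau_n)(1+|z_n|)^2$. The vanishing initial condition $P_n(\tau_n)=0$ is precisely what produces the factor $\tau_{n+1}-\tau_n$, and obtaining this linear-in-step diffusion bound is the delicate point of the whole argument.

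With these two ingredients the one-step bound (\ref{eq:2.7}) follows quickly. Writing $z_{n+1}-z_n=A_n+S_n\eta_n$, applying the $c_r$-inequality, taking conditional expectation and using $|S_n\eta_n|\le|S_n|\,|\eta_n|$, the independence of $\eta_n$ from $\mathfrak{F}_{\tau_n}$ and the finiteness of $\mathbb{E}|\eta_n|^{2q}$, I get $\mathbb{E}(|z_{n+1}-z_n|^{2q}\mid\mathfrak{F}_{\tau_n})\le K(|A_n|^{2q}+|S_n|^{2q})$. Inserting the estimates of the previous two paragraphs and absorbing one factor $\Delta^q$ coming from $(\tau_{n+1}-\tau_n)^{2q}$ yields $\mathbb{E}(|z_{n+1}-z_n|^{2q}\mid\mathfrak{F}_{\tau_n})\le K(T)(\tau_{n+1}-\tau_n)^q(1+|z_n|^{2q})$, which is (\ref{eq:2.7}).

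Finally, for (\ref{eq:2.6}) I would decompose $z_n=z_0+\sum_{j=0}^{n-1}A_j+\mathcal{M}_n$ with $\mathcal{M}_n=\sum_{j=0}^{n-1}S_j\eta_j$ a discrete $(\mathfrak{F}_{\tau_n})$-martingale, and set $v_n=\mathbb{E}(\max_{j\le n}|z_j|^{2q})$, which is finite for each fixed $N$ by induction using $|z_{j+1}|\le K(1+|z_j|)(1+|\eta_j|)$. The drift sum is controlled pathwise by $\sum_{j}(\tau_{j+1}-\tau_j)\,K(1+\max_{i\le j}|z_i|)$, and a discrete Jensen inequality with weights $\tau_{j+1}-\tau_j$ makes it contribute $K(T)\sum_j(\tau_{j+1}-\tau_j)(1+v_j)$. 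For the martingale part the discrete Burkholder--Davis--Gundy inequality gives $\mathbb{E}\max_{j\le n}|\mathcal{M}_j|^{2q}\le C_q\,\mathbb{E}\big(\sum_{j<n}|S_j\eta_j|^2\big)^q$, and a second Jensen inequality together with (\ref{eq:2.7}) reduces this to $K(T)\sum_j(\tau_{j+1}-\tau_j)(1+v_j)$ as well. Combining, $v_n\le K(1+v_0)+K(T)\sum_{j<n}(\tau_{j+1}-\tau_j)v_j$, and since $\sum_j(\tau_{j+1}-\tau_j)\le T-t_0$ the discrete Gronwall inequality yields $v_N\le K(T)(1+\mathbb{E}|z_0|^{2q})$, which is (\ref{eq:2.6}). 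The two places I expect to require the most care are the cancellation in the covariance ODE that produces the essential $O(\tau_{n+1}-\tau_n)$ diffusion bound, and the control of $\mathbb{E}\big(\sum_j|S_j\eta_j|^2\big)^q$ in the Burkholder--Davis--Gundy step, both resolved by the estimates above.
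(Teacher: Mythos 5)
Your proposal is correct and follows the same overall architecture as the paper's proof: linear-growth bounds on $B_n^k$, $b_n^k$, $\mu_n$ from Hypothesis \ref{hyp:Hip1} and Gronwall; the crucial estimate $\left\vert \sigma_n(t)-\mu_n(t)\mu_n^{\intercal}(t)\right\vert \leq K(T)(1+\left\vert z_n\right\vert^2)(t-\tau_n)$; the one-step bound (\ref{eq:2.7}) via the $c_r$-inequality and independence of $\eta_n$; and (\ref{eq:2.6}) via the drift-plus-martingale decomposition, Burkholder--Davis--Gundy with a H\"older/Jensen reweighting by $\tau_{k+1}-\tau_k$, and the discrete Gronwall lemma. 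The one place you genuinely diverge is the derivation of the covariance bound (\ref{eq:2.13}): the paper expands $\widetilde{\sigma}_n(t)$ algebraically into four integral terms (the $\mathcal{L}_n$ integral plus the cross and quadratic terms in $\mu_n(t)-z_n$), each manifestly $O(t-\tau_n)$, whereas you differentiate $P_n=\sigma_n-\mu_n\mu_n^{\intercal}$, observe the cancellation of the $\mu_n b_n^{0\,\top}$ terms, and obtain a closed linear matrix ODE with zero initial datum and a positive semidefinite source of size $K(1+\left\vert z_n\right\vert)^2$, from which Gronwall gives the same bound. Your route is arguably cleaner and makes transparent both why the factor $\tau_{n+1}-\tau_n$ appears (the vanishing initial condition) and why $P_n(\tau_{n+1})$ is positive semidefinite, hence admits the square root used in (\ref{eq:1.6}); it also handles the norm of the matrix square root more carefully ($\left\vert \sqrt{A}\right\vert^2=\mathrm{tr}\,A\leq\sqrt{d}\left\vert A\right\vert$ rather than the paper's $\left\vert\sqrt{A}\right\vert^2=\left\vert A\right\vert$, which only holds up to a dimensional constant). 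Both derivations yield identical estimates, and the remainder of your argument matches the paper step for step.
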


\begin{proof}
From Hypothesis \ref{hyp:Hip1} it follows that $\left\vert
B_{n}^{k}\right\vert \leq K$ and
\begin{equation}
\left\vert b_{n}^{k}\left( s\right) \right\vert \leq K\left(
T\right) \left( 1+\left\vert z_{n}\right\vert \right)
\label{eq:2.17}
\end{equation}%
for all $n=0,\ldots ,N-1$, $k=0,\ldots ,m$ and $s\in \left[ \tau
_{n},\tau _{n+1}\right] $. Then, combining Gronwall's lemma with
(\ref{eq:1.3}) gives
\begin{equation}
\left\vert \mu _{n}\left( s\right) \right\vert \leq K\left( T\right)
\left( 1+\left\vert z_{n}\right\vert \right) \quad \quad \quad
\forall s\in \left[ \tau _{n},\tau _{n+1}\right] .  \label{eq:2.2}
\end{equation}%
Since $\left\vert x\,y^{\top }\right\vert =\left\vert x\right\vert
\left\vert y\right\vert $ for any $x,y\in \mathbb{R}^{d}$,
(\ref{eq:2.17}) and (\ref{eq:2.2}) lead to
\begin{equation}
\left\vert \mathcal{L}_{n}\left( s,\sigma \right) \right\vert \leq
K\left\vert \sigma \right\vert +K\left( T\right) \left( 1+\left\vert
z_{n}\right\vert ^{2}\right) \quad \quad \quad \forall s\in \left[
\tau _{n},\tau _{n+1}\right] ,  \label{eq:2.1}
\end{equation}%
where $n=0,\ldots ,N-1$ and $\mathcal{L}_{n}$ is as in
(\ref{eq:1.4}). Using Gronwall's lemma, (\ref{eq:1.4}) and
(\ref{eq:2.1}) we deduce that
\begin{equation}
\left\vert \sigma _{n}\left( s\right) \right\vert \leq K\left(
T\right) \left( 1+\left\vert z_{n}\right\vert ^{2}\right) \quad
\quad \quad \forall s\in \left[ \tau _{n},\tau _{n+1}\right] .
\label{eq:2.3}
\end{equation}%
Decomposing
\begin{equation*}
\widetilde{\sigma }_{n}\left( t\right) :=\sigma _{n}\left( t\right)
-\mu _{n}\left( t\right) \mu _{n}^{\intercal }\left( t\right)
\end{equation*}%
as $\sigma _{n}\left( t\right) -z_{n}z_{n}^{\top }-z_{n}\left( \mu
_{n}\left( t\right) -z_{n}\right) ^{\top }-\left( \mu _{n}\left(
t\right) -z_{n}\right) z_{n}^{\top }-\left( \mu _{n}\left( t\right)
-z_{n}\right) \left( \mu _{n}\left( t\right) -z_{n}\right) ^{\top }$
we have
\begin{equation*}
\begin{aligned} \widetilde{\sigma}_n \left( t \right) &= \int_{\tau_n}^t
\mathcal {L}_n \left( s , \sigma_n \left( s \right) \right) ds - z_n
\left( \int_{\tau_n}^{t} \left( B^0_n \, \mu_n \left( s \right) +
b^0_n \left(s \right) \right) ds \right)^{\top} \quad -
\int_{\tau_n}^{t} \left( B^0_n \, \mu_n \left( s \right) + b^0_n
\left(s \right) \right) ds \, z_n^{\top} \\ & \quad -
\int_{\tau_n}^{t} \left( B^0_n \, \mu_n \left( s \right) + b^0_n
\left(s \right) \right) ds \left( \int_{\tau_n}^{t} \left( B^0_n \,
\mu_n \left( s \right) + b^0_n \left(s \right) \right) ds
\right)^{\top} , \end{aligned}
\end{equation*}%
and so (\ref{eq:2.2}), (\ref{eq:2.1}) and (\ref{eq:2.3}) yields
\begin{equation}
\left\vert \widetilde{\sigma }_{n}\left( t\right) \right\vert \leq
K\left( T\right) \left( 1+\left\vert z_{n}\right\vert ^{2}\right)
\left( t-\tau
_{n}\right) \quad \quad \quad \forall t\in \left[ \tau _{n},\tau _{n+1}%
\right] .  \label{eq:2.13}
\end{equation}

Iterating (\ref{eq:1.6}) we obtain
\begin{equation}
z_{n+1}=z_{0}+\int_{t_{0}}^{\tau _{n+1}}\left( B_{n\left( s\right)
}^{0}\,\mu _{n\left( s\right) }\left( s\right) +b_{n\left( s\right)
}^{0}\left( s\right) \right) ds+S_{n+1},  \label{eq:2.8}
\end{equation}%
where $n=0,\ldots ,N-1$, $n\left( t\right) =\max \left\{ n=0,\ldots
,N:\tau _{n}\leq t\right\} $ and
\begin{equation*}
S_{n+1}=\sum_{k=0}^{n}\sqrt{\sigma _{k}\left( \tau _{k+1}\right)
-\mu _{k}\left( \tau _{k+1}\right) \mu _{k}^{\intercal }\left( \tau
_{k+1}\right) }\,\eta _{k}.
\end{equation*}%
Applying H\"{o}lder's inequality we get
\begin{align*}
& \left\vert \int_{t_{0}}^{\tau _{n+1}}\left( B_{n\left( s\right)
}^{0}\,\mu _{n\left( s\right) }\left( s\right) +b_{n\left( s\right)
}^{0}\left(
s\right) \right) ds\right\vert ^{2q} \\
& \leq \left( \tau _{n+1}-t_{0}\right) ^{2q-1}\int_{t_{0}}^{\tau
_{n+1}}\left\vert \left( B_{n\left( s\right) }^{0}\,\mu _{n\left(
s\right) }\left( s\right) +b_{n\left( s\right) }^{0}\left( s\right)
\right) \right\vert ^{2q}ds,
\end{align*}%
and so (\ref{eq:2.17}) and (\ref{eq:2.2}) yield
\begin{equation}
\left\vert \int_{t_{0}}^{\tau _{n+1}}\left( B_{n\left( s\right)
}^{0}\,\mu _{n\left( s\right) }\left( s\right) +b_{n\left( s\right)
}^{0}\left( s\right) \right) ds\right\vert ^{2q}\leq K\left(
T\right) \left( 1+\int_{t_{0}}^{\tau _{n+1}}\left\vert z_{n\left(
s\right) }\right\vert ^{2q}ds\right) .  \label{eq:2.4}
\end{equation}

Set $S_{0}=0$. For any $n=0,\ldots ,N-1$,
\begin{align*}
\mathbb{E}\left( \left\vert S_{n+1}\right\vert ^{2}\right) & =\mathbb{E(}%
S_{n+1}^{\intercal }S_{n+1}) \\
& =\sum_{k=0}^{n}\mathbb{E}\left( \eta _{k}^{\intercal }\left(
\sigma _{k}\left( \tau _{k+1}\right) -\mu _{k}\left( \tau
_{k+1}\right) \mu
_{k}^{\intercal }\left( \tau _{k+1}\right) \right) \eta _{k}\right) \\
& =\sum_{k=0}^{n}\sum_{\ell =1}^{d}\mathbb{E}\left( \sigma
_{k}\left( \tau _{k+1}\right) ^{\ell ,\ell }+\left( \mu _{k}\left(
\tau _{k+1}\right) ^{\ell }\right) ^{2}\right) .
\end{align*}%
Since $\sigma _{k}\left( \tau _{k+1}\right) =\mathbb{E}\left(
Y_{\tau
_{k+1}}Y_{\tau _{k+1}}^{\top }\diagup \mathfrak{F}_{\tau _{k}}\right) $, (%
\ref{eq:2.2}) yields
\begin{equation*}
\mathbb{E}\left( \left\vert S_{n+1}\right\vert ^{2}\right) \leq
\sum_{k=0}^{n}\left( \mathbb{E}\left( \left\vert Y_{\tau
_{k+1}}\right\vert ^{2}\right) +\mathbb{E}\left\vert \mu _{k}\left(
\tau _{k+1}\right) \right\vert ^{2}\right) <+\infty ,
\end{equation*}%
and so $\left( S_{n}\right) _{n=0,\ldots ,N}$ is a $\left( \mathfrak{F}%
_{\tau _{n}}\right) _{n=0,\ldots ,N}$-square integrable martingale.
According to the Burkholder-Davis-Gundy inequality we have
\begin{equation*}
\mathbb{E}\left( \max_{k=0,\ldots ,n}\left( S_{k}^{j}\right)
^{2q}\right)
\leq C_{q}\mathbb{E}\left( \left[ S^{j},S^{j}\right] _{n}^{q}\right) =C_{q}%
\mathbb{E}\left( \sum_{k=0}^{n}\left( \left( \sqrt{\widetilde{\sigma }%
_{k}\left( \tau _{k+1}\right) }\,\eta _{k}\right) ^{j}\right)
^{2}\right) ^{q},
\end{equation*}%
where $C_{q}>0$ and $y^{j}$ stands for the $j$-th coordinate of the vector $%
y $. Applying H\"{o}lder's inequality yields
\begin{align*}
& \left( \sum_{k=0}^{n}\left( \tau _{k+1}-\tau _{k}\right)
^{1/p}\left( \tau
_{k+1}-\tau _{k}\right) ^{1/q}\left( \left( \sqrt{\widetilde{\sigma }%
_{k}\left( \tau _{k+1}\right) }\,\eta _{k}\right) ^{j}\right)
^{2}/\left(
\tau _{k+1}-\tau _{k}\right) \right) ^{q} \\
& \leq \left( \sum_{k=0}^{n}\left( \tau _{k+1}-\tau _{k}\right)
\right)
^{q-1}\sum_{k=0}^{n}\left( \tau _{k+1}-\tau _{k}\right) \left( \left( \sqrt{%
\widetilde{\sigma }_{k}\left( \tau _{k+1}\right) }\,\eta _{k}\right)
^{j}\right) ^{2q}/\left( \tau _{k+1}-\tau _{k}\right) ^{q}
\end{align*}%
with $1/p+1/q=1$. Using $\left\vert \sqrt{\widetilde{\sigma
}_{k}\left( \tau _{k+1}\right) }\right\vert ^{2}=\left\vert
\widetilde{\sigma }_{k}\left( \tau _{k+1}\right) \right\vert $ we
obtain
\begin{align*}
\mathbb{E}\left( \max_{k=0,\ldots ,n}\left\vert S_{k}\right\vert
^{2q}\right) & \leq \left( Td\right)
^{q-1}C_{q}\sum_{k=0}^{n}\mathbb{E} \left( \left( \tau _{k+1}-\tau
_{k}\right) \frac{\left\vert \sqrt{\widetilde{ \sigma }_{k}\left(
\tau _{k+1}\right) }\right\vert ^{2q}}{\left( \tau
_{k+1}-\tau _{k}\right) ^{q}}\left\vert \eta _{k}\right\vert ^{2q}\right) \\
& \leq \left( Td\right) ^{q-1}C_{q}\sum_{k=0}^{n}\mathbb{E}\left(
\left( \tau _{k+1}-\tau _{k}\right) \left( \frac{\left\vert
\widetilde{\sigma } _{k}\left( \tau _{k+1}\right) \right\vert }{\tau
_{k+1}-\tau _{k}}\right) ^{q}\left\vert \eta _{k}\right\vert
^{2q}\right) .
\end{align*}%
Hence (\ref{eq:2.13}) yields
\begin{equation}
\mathbb{E}\left( \max_{k=0,\ldots ,n}\left\vert S_{k}\right\vert
^{2q}\right) \leq K\left( T\right) \mathbb{E}\left( \left\vert \eta
_{0}\right\vert ^{2q}\right) \left( 1+\sum_{k=0}^{n}\left( \tau
_{k+1}-\tau _{k}\right) \mathbb{E(}\left\vert z_{k}\right\vert
^{2q})\right) . \label{eq:2.5}
\end{equation}

Using (\ref{eq:2.8}), (\ref{eq:2.4}) and (\ref{eq:2.5}), together with H\"{o}%
lder's inequality, we get
\begin{equation*}
\mathbb{E}\left( \max_{j=0,\ldots ,n+1}\left\vert z_{j}\right\vert
^{2q}\right) \leq K\left( T\right) \left( \mathbb{E}\left\vert
z_{0}\right\vert ^{2q}+1+\sum_{k=0}^{n}\left( \tau _{k+1}-\tau
_{k}\right) \mathbb{E(}\left\vert z_{k}\right\vert ^{2q})\right) .
\end{equation*}%
The discrete time Gronwall-Bellman lemma now leads to
(\ref{eq:2.6}).

We proceed to show (\ref{eq:2.7}). Using H\"{o}lder's inequality and (\ref%
{eq:2.2}) we obtain
\begin{eqnarray*}
\left\vert \int_{\tau _{n}}^{\tau _{n+1}}\left( B_{n}^{0}\,\mu
_{n}\left( s\right) +b_{n}^{0}\left( s\right) \right) ds\right\vert
^{2q} &\leq &\left( \tau _{n+1}-\tau _{n}\right) ^{2q-1}\int_{\tau
_{n}}^{\tau _{n+1}}\left( \left\vert B_{n}^{0}\right\vert \left\vert
\mu _{n}\left( s\right) \right\vert +\left\vert b_{n}^{0}\left(
s\right) \right\vert \right) ^{2q}ds
\\
&\leq &K\left( T\right) \left( \tau _{n+1}-\tau _{n}\right)
^{2q}\left( 1+\left\vert z_{n}\right\vert ^{2q}\right) .
\end{eqnarray*}
By (\ref{eq:2.13}),
\begin{align*}
\left\vert \sqrt{\widetilde{\sigma }_{n}\left( \tau _{n+1}\right)
}\,\eta _{n}\right\vert ^{2q}& \leq \left\vert
\sqrt{\widetilde{\sigma }_{n}\left( \tau _{n+1}\right) }\right\vert
^{2q}\left\vert \eta _{n}\right\vert ^{2q} \\ & =\left\vert
\widetilde{\sigma }_{n}\left( \tau _{n+1}\right)
\right\vert ^{q}\left\vert \eta _{n}\right\vert ^{2q} \\
& \leq K\left( T\right) \left( 1+\left\vert z_{n}\right\vert
^{2q}\right) \left( \tau _{n+1}-\tau _{n}\right) ^{q}\left\vert \eta
_{n}\right\vert ^{2q}.
\end{align*}%
Hence
\begin{equation*}
\mathbb{E}\left( \left\vert \sqrt{\widetilde{\sigma }_{n}\left( \tau
_{n+1}\right) }\,\eta _{n}\right\vert ^{2q}\diagup
\mathfrak{F}_{\tau _{n}}\right) \leq K\left( T\right) \left(
1+\left\vert z_{n}\right\vert ^{2q}\right) \left( \tau _{n+1}-\tau
_{n}\right) ^{q}\mathbb{E}\left( \left\vert \eta _{n}\right\vert
^{2q}\right) .
\end{equation*}%
This implies (\ref{eq:2.7}), because
\begin{align*}
\mathbb{E}\left( \left\vert z_{n+1}-z_{n}\right\vert ^{2q}\diagup \mathfrak{F%
}_{\tau _{n}}\right) & \leq 2^{2q-1}\mathbb{E}\left( \left\vert
\int_{\tau _{n}}^{\tau _{n+1}}\left( B_{n}^{0}\,\mu _{n}\left(
s\right)
+b_{n}^{0}\left( s\right) \right) ds\right\vert ^{2q}\diagup \mathfrak{F}%
_{\tau _{k}}\right) \\
& \quad +2^{2q-1}\mathbb{E}\left( \left\vert \sqrt{\widetilde{\sigma }%
_{n}\left( \tau _{n+1}\right) }\,\eta _{n}\right\vert ^{2q}\diagup \mathfrak{%
F}_{\tau _{n}}\right) .
\end{align*}
\end{proof}

\begin{lemma}
\label{lem:CompMomentos} Assume the hypothesis of Theorem \ref{th:convScheme}%
. Let
\begin{equation*}
\chi _{n+1}=f\left( \tau _{n},z_{n}\right) \left( \tau _{n+1}-\tau
_{n}\right) +\sum_{k=1}^{m}g^{k}\left( \tau _{n},z_{n}\right) \left(
W_{\tau _{n+1}}^{k}-W_{\tau _{k}}^{k}\right) .
\end{equation*}%
Then, for all $n=0,\ldots ,N-1$, it is obtained that%
\begin{equation}
\left\vert \mathbb{E}\left( \left( z_{n+1}-z_{n}\right) \diagup \mathfrak{F}%
_{\tau _{n}}\right) -\mathbb{E}\left( \chi _{n+1}\diagup
\mathfrak{F}_{\tau _{n}}\right) \right\vert \leq K\left( T\right)
\left( \tau _{n+1}-\tau _{n}\right) ^{2}\left( 1+\left\vert
z_{n}\right\vert \right) , \label{eq:2.9}
\end{equation}%
\begin{equation}
\left\vert \mathbb{E}\left( \left( z_{n+1}-z_{n}\right) \left(
z_{n+1}-z_{n}\right) ^{\top }\diagup \mathfrak{F}_{\tau _{n}}\right) -%
\mathbb{E}\left( \chi _{n+1}\chi _{n+1}^{\top }\diagup
\mathfrak{F}_{\tau _{n}}\right) \right\vert \leq K\left( T\right)
\left( \tau _{n+1}-\tau _{n}\right) ^{2}\left( 1+\left\vert
z_{n}\right\vert ^{2}\right) , \label{eq:2.10}
\end{equation}
and%
\begin{equation}
\begin{array}{c}
\left\vert \mathbb{E}\left( \left( z_{n+1}-z_{n}\right) ^{\ell
}\left(
z_{n+1}-z_{n}\right) \left( z_{n+1}-z_{n}\right) ^{\top }\diagup \mathfrak{F}%
_{\tau _{n}}\right) -\mathbb{E}\left( \chi _{n+1}^{\ell }\chi
_{n+1}\chi
_{n+1}^{\top }\diagup \mathfrak{F}_{\tau _{n}}\right) \right\vert \\
\leq K\left( T\right) \left( \tau _{n+1}-\tau _{n}\right) ^{2}\left(
1+\left\vert z_{n}\right\vert ^{2}\right).%
\end{array}
\label{eq:2.11}
\end{equation}
\end{lemma}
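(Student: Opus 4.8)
The plan is to exploit the moment structure built into Scheme \ref{scheme:MM1}. Writing $z_{n+1}-z_{n}=\left( \mu _{n}\left( \tau _{n+1}\right) -z_{n}\right) +\sqrt{\widetilde{\sigma }_{n}\left( \tau _{n+1}\right) }\,\eta _{n}$, where $\eta _{n}$ is independent of $\mathfrak{F}_{\tau _{n}}$, symmetric, with $\mathbb{E}\,\eta _{n}=0$, unit covariance, and vanishing odd moments, I would first record the three conditional moments of $z_{n+1}-z_{n}$ in closed form: the mean is $\mu _{n}\left( \tau _{n+1}\right) -z_{n}$; the second moment is $\left( \mu _{n}\left( \tau _{n+1}\right) -z_{n}\right) \left( \mu _{n}\left( \tau _{n+1}\right) -z_{n}\right) ^{\top }+\widetilde{\sigma }_{n}\left( \tau _{n+1}\right) $; and, since every odd moment of $\eta _{n}$ is zero, the third moment is an explicit combination of $\mu _{n}\left( \tau _{n+1}\right) -z_{n}$ and of the columns of $\widetilde{\sigma }_{n}\left( \tau _{n+1}\right) $. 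In parallel, because the increments $W_{\tau _{n+1}}^{k}-W_{\tau _{n}}^{k}$ are independent of $\mathfrak{F}_{\tau _{n}}$, centred, with covariance $\delta _{jk}\left( \tau _{n+1}-\tau _{n}\right) $ and vanishing odd moments, the conditional moments of $\chi _{n+1}$ are, respectively, $f\left( \tau _{n},z_{n}\right) \left( \tau _{n+1}-\tau _{n}\right) $; the matrix $ff^{\top }\left( \tau _{n+1}-\tau _{n}\right) ^{2}+\sum_{k=1}^{m}g^{k}(g^{k})^{\top }\left( \tau _{n+1}-\tau _{n}\right) $; and an explicit third-order tensor, all coefficients being evaluated at $\left( \tau _{n},z_{n}\right) $.

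The second ingredient is a short-time expansion of $\mu _{n}$ and $\widetilde{\sigma }_{n}$ read off from the defining equations (\ref{eq:1.3})--(\ref{eq:1.4}). Since $\mu _{n}\left( \tau _{n}\right) =z_{n}$ and $\dot{\mu}_{n}\left( \tau _{n}\right) =B_{n}^{0}z_{n}+b_{n}^{0}\left( \tau _{n}\right) =f\left( \tau _{n},z_{n}\right) $, one obtains $\mu _{n}\left( \tau _{n+1}\right) -z_{n}=f\left( \tau _{n},z_{n}\right) \left( \tau _{n+1}-\tau _{n}\right) +\rho _{n}$ with $\left\vert \rho _{n}\right\vert \leq K\left( T\right) \left( 1+\left\vert z_{n}\right\vert \right) \left( \tau _{n+1}-\tau _{n}\right) ^{2}$, the remainder being controlled by $\left\vert B_{n}^{k}\right\vert \leq K$ together with (\ref{eq:2.17}) and (\ref{eq:2.2}). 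Likewise, writing $\widetilde{\sigma }_{n}=\sigma _{n}-\mu _{n}\mu _{n}^{\top }$ and differentiating gives $\widetilde{\sigma }_{n}\left( \tau _{n}\right) =0$ and $\dot{\widetilde{\sigma }}_{n}\left( \tau _{n}\right) =\sum_{k=1}^{m}g^{k}\left( \tau _{n},z_{n}\right) (g^{k}\left( \tau _{n},z_{n}\right) )^{\top }$, so that $\widetilde{\sigma }_{n}\left( \tau _{n+1}\right) =\sum_{k=1}^{m}g^{k}(g^{k})^{\top }\left( \tau _{n+1}-\tau _{n}\right) +\theta _{n}$ with $\left\vert \theta _{n}\right\vert \leq K\left( T\right) \left( 1+\left\vert z_{n}\right\vert ^{2}\right) \left( \tau _{n+1}-\tau _{n}\right) ^{2}$, the remainder now bounded through an estimate of the second derivative of $\widetilde{\sigma }_{n}$ using (\ref{eq:2.2}), (\ref{eq:2.3}) and (\ref{eq:2.13}). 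Substituting these expansions into the moment identities of the previous paragraph yields (\ref{eq:2.9}) at once (only $\rho _{n}$ survives), and yields (\ref{eq:2.10}) because the leading parts $ff^{\top }\left( \tau _{n+1}-\tau _{n}\right) ^{2}$ and $\sum_{k}g^{k}(g^{k})^{\top }\left( \tau _{n+1}-\tau _{n}\right) $ coincide, leaving a difference governed by $\rho _{n}$ and $\theta _{n}$.

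For the third moment (\ref{eq:2.11}) the plan is the same substitution, but here is the crux: Scheme \ref{scheme:MM1} reproduces only the first two moments of $Y_{\tau _{n+1}}$, so (\ref{eq:2.11}) cannot be reduced to a statement about the Local Linear approximation $Y$ and must be verified directly. I would insert the two expansions $\mu _{n}\left( \tau _{n+1}\right) -z_{n}=f\left( \tau _{n},z_{n}\right) \left( \tau _{n+1}-\tau _{n}\right) +\rho _{n}$ and $\widetilde{\sigma }_{n}\left( \tau _{n+1}\right) =\sum_{k}g^{k}(g^{k})^{\top }\left( \tau _{n+1}-\tau _{n}\right) +\theta _{n}$ into the closed-form third moment of $z_{n+1}-z_{n}$ and compare it term by term with the third moment of $\chi _{n+1}$. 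The contributions of order $\left( \tau _{n+1}-\tau _{n}\right) ^{2}$, which are assembled solely from $f$ and $\sum_{k}g^{k}(g^{k})^{\top }$, agree exactly and cancel; what survives is generated by the corrections $\rho _{n}$, $\theta _{n}$ and by the purely cubic term $\left( \mu _{n}\left( \tau _{n+1}\right) -z_{n}\right) ^{\ell }\left( \mu _{n}\left( \tau _{n+1}\right) -z_{n}\right) \left( \mu _{n}\left( \tau _{n+1}\right) -z_{n}\right) ^{\top }$, each of order $\left( \tau _{n+1}-\tau _{n}\right) ^{3}$ up to a polynomial factor in $\left\vert z_{n}\right\vert $, and is therefore dominated by the right-hand side of (\ref{eq:2.11}) after invoking $\tau _{n+1}-\tau _{n}\leq \Delta $ and the polynomial-growth bounds on the coefficients and on $\mu _{n},\widetilde{\sigma }_{n}$.

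I expect the main obstacle to be precisely this third-moment step. The first two estimates follow almost mechanically once the moment identities and the two ODE expansions are in hand, whereas (\ref{eq:2.11}) requires computing a full third tensor moment and, above all, checking that the leading $O(\left( \tau _{n+1}-\tau _{n}\right) ^{2})$ parts of the scheme and of $\chi _{n+1}$ are identical. This cancellation is decisive: it is what removes the genuinely $O(\left( \tau _{n+1}-\tau _{n}\right) ^{2})$ terms and leaves only the higher-order remainder, and it rests on the symmetry of $\eta _{n}$ together with the exact short-time coefficients $\dot{\mu}_{n}\left( \tau _{n}\right) =f\left( \tau _{n},z_{n}\right) $ and $\dot{\widetilde{\sigma }}_{n}\left( \tau _{n}\right) =\sum_{k=1}^{m}g^{k}(g^{k})^{\top }$, which are exactly the quantities carried by the Euler-type increment $\chi _{n+1}$.
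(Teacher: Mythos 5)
Your proposal is correct and follows essentially the same route as the paper: the same decomposition $z_{n+1}-z_{n}=(\mu _{n}(\tau _{n+1})-z_{n})+\sqrt{\widetilde{\sigma }_{n}(\tau _{n+1})}\,\eta _{n}$, the same explicit conditional-moment formulas for both $z_{n+1}-z_{n}$ and $\chi _{n+1}$ (using the symmetry of $\eta _{n}$ and of the Wiener increments), and the same short-time expansions $\mu _{n}(\tau _{n+1})-z_{n}=f(\tau _{n},z_{n})(\tau _{n+1}-\tau _{n})+O((\tau _{n+1}-\tau _{n})^{2})$ and $\widetilde{\sigma }_{n}(\tau _{n+1})=\sum_{k}g^{k}(g^{k})^{\top }(\tau _{n+1}-\tau _{n})+O((\tau _{n+1}-\tau _{n})^{2})$, the latter being exactly the paper's estimate via $\mathcal{L}_{n}(\tau _{n},z_{n}z_{n}^{\top })$. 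Your identification of the leading-order cancellation in the third-moment tensor as the decisive step is precisely how the paper concludes.
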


\begin{proof}
Since $B_{n}^{0}\,z_{n}+b_{n}^{0}\left( \tau _{n}\right) =f\left(
\tau _{n},z_{n}\right) $,
\begin{eqnarray*}
\mu _{n}\left( \tau _{n+1}\right) -z_{n}-f\left( \tau
_{n},z_{n}\right) \left( \tau _{n+1}-\tau _{n}\right)  &=&\int_{\tau
_{n}}^{\tau _{n+1}}\left( B_{n}^{0}\,\mu _{n}\left( s\right)
+b_{n}^{0}\left( s\right) -f\left( \tau
_{n},z_{n}\right) \right) ds \\
&=&\int_{\tau _{n}}^{\tau _{n+1}}\left( B_{n}^{0}\left( \mu
_{n}\left( s\right) -z_{n}\right) +b_{n}^{0}\left( s\right)
-b_{n}^{0}\left( \tau _{n}\right) \right) ds.
\end{eqnarray*}
Using (\ref{eq:1.5}) and (\ref{eq:2.2}) we deduce that
\begin{eqnarray}
\left\vert \mu _{n}\left( \tau _{n+1}\right) -z_{n}-f\left( \tau
_{n},z_{n}\right) \left( \tau _{n+1}-\tau _{n}\right) \right\vert
&\leq &K\int_{\tau _{n}}^{\tau _{n+1}}\left( \left\vert \mu
_{n}\left( s\right)
-z_{n}\right\vert +s-\tau _{n}\right) ds  \notag \\
&\leq &K\int_{\tau _{n}}^{\tau _{n+1}}\int_{\tau _{n}}^{s}\left\vert
B_{n}^{0}\,\mu _{n}\left( r\right) +b_{n}^{0}\left( s\right)
\right\vert
drds+K\left( \tau _{n+1}-\tau _{n}\right) ^{2}  \notag \\
&\leq &K\left( T\right) \left( \tau _{n+1}-\tau _{n}\right)
^{2}\left( 1+\left\vert z_{n}\right\vert \right) .  \label{eq:2.14}
\end{eqnarray}
Since
\begin{equation*}
\left\vert \mathbb{E}\left( \left( z_{n+1}-z_{n}\right) \diagup \mathfrak{F}%
_{\tau _{n}}\right) -\mathbb{E}\left( \chi _{n+1}\diagup
\mathfrak{F}_{\tau _{n}}\right) \right\vert =\left\vert \mu
_{n}\left( \tau _{n+1}\right) -z_{n}-f\left( \tau _{n},z_{n}\right)
\left( \tau _{n+1}-\tau _{n}\right) \right\vert ,
\end{equation*}%
(\ref{eq:2.14}) yields (\ref{eq:2.9}).

From
\begin{equation*}
\mathbb{E}\left( \chi _{n+1}\chi _{n+1}^{\top }\diagup
\mathfrak{F}_{\tau _{n}}\right) =f\left( \tau _{n},z_{n}\right)
f\left( \tau _{n},z_{n}\right) ^{\top }\left( \tau _{n+1}-\tau
_{n}\right) ^{2}+\sum_{k=1}^{m}g^{k}\left( \tau _{n},z_{n}\right)
g^{k}\left( \tau _{n},z_{n}\right) ^{\top }\left( \tau _{n+1}-\tau
_{n}\right)
\end{equation*}%
we obtain%
\begin{equation}
\left\vert \mathbb{E}\left( \chi _{n+1}\chi _{n+1}^{\top }\diagup \mathfrak{F%
}_{\tau _{n}}\right) -\sum_{k=1}^{m}g^{k}\left( \tau
_{n},z_{n}\right) (g^{k}\left( \tau _{n},z_{n}\right) )^{\top
}\left( \tau _{n+1}-\tau _{n}\right) \right\vert \leq K\left(
T\right) \left( \tau _{n+1}-\tau _{n}\right) ^{2}\left( 1+\left\vert
z_{n}\right\vert ^{2}\right) . \label{eq:2.12}
\end{equation}

As in the proof of Lemma \ref{lem:CotaCrecimiento}, we define $\widetilde{%
\sigma }_{n}\left( t\right) :=\sigma _{n}\left( t\right) -\mu
_{n}\left( t\right) \mu _{n}\left( t\right) ^{\top }$ for any $t\in
\left[ \tau _{n},\tau _{n+1}\right] $. Then
\begin{align*}
\mathbb{E}\left( \left( z_{n+1}-z_{n}\right) \left(
z_{n+1}-z_{n}\right) ^{\top }\diagup \mathfrak{F}_{\tau _{n}}\right)
& =\left( \mu _{n}\left( \tau _{n+1}\right) -z_{n}\right) \left( \mu
_{n}\left( \tau _{n+1}\right) -z_{n}\right) ^{\top }
+\widetilde{\sigma }_{n}\left( \tau _{n+1}\right) .
\end{align*}%
Since
\begin{align*}
\widetilde{\sigma }_{n}\left( \tau _{n+1}\right) & =\sigma
_{n}\left( \tau _{n+1}\right) -z_{n}z_{n}^{\top }-z_{n}\left( \mu
_{n}\left( \tau _{n+1}\right) -z_{n}\right) ^{\top }-\left( \mu
_{n}\left( \tau
_{n+1}\right) -z_{n}\right) z_{n}^{\top } \\
& \quad -\left( \mu _{n}\left( \tau _{n+1}\right) -z_{n}\right)
\left( \mu _{n}\left( \tau _{n+1}\right) -z_{n}\right) ^{\top },
\end{align*}%
applying (\ref{eq:2.14}) yields
\begin{align*}
& \left\vert \mathbb{E}\left( \left( z_{n+1}-z_{n}\right) \left(
z_{n+1}-z_{n}\right) ^{\top }\diagup \mathfrak{F}_{\tau _{n}}\right)
-\sigma
_{n}\left( \tau _{n+1}\right) +z_{n}z_{n}^{\top }\right. \\
& \hspace{3cm}\left. +z_{n}f\left( \tau _{n},z_{n}\right) ^{\top
}\left( \tau _{n+1}-\tau _{n}\right) +f\left( \tau _{n},z_{n}\right)
z_{n}^{\top
}\left( \tau _{n+1}-\tau _{n}\right) \right\vert \\
& \leq K\left( T\right) \left( \tau _{n+1}-\tau _{n}\right)
^{2}\left( 1+\left\vert z_{n}\right\vert ^{2}\right) .
\end{align*}%
Using (\ref{eq:2.17}), (\ref{eq:2.2}) and (\ref{eq:2.1}), together
with Hypothesis \ref{hyp:Hip1}, we deduce that
\begin{equation*}
\left\vert \mathcal{L}_{n}\left( s,\sigma _{n}\left( s\right) \right) -%
\mathcal{L}_{n}\left( \tau _{n},z_{n}z_{n}^{\top }\right)
\right\vert \leq K\left( T\right) \left( s-\tau _{n}\right) \left(
1+\left\vert z_{n}\right\vert ^{2}\right) ,
\end{equation*}%
and so
\begin{eqnarray*}
\left\vert \sigma _{n}\left( \tau _{n+1}\right) -z_{n}z_{n}^{\top }-\mathcal{%
L}_{n}\left( \tau _{n},z_{n}z_{n}^{\top }\right) \left( \tau
_{n+1}-\tau _{n}\right) \right\vert  &\leq &\int_{\tau _{n}}^{\tau
_{n+1}}\left\vert
\mathcal{L}_{n}\left( s,\sigma _{n}\left( s\right) \right) -\mathcal{L}%
_{n}\left( \tau _{n},z_{n}z_{n}^{\top }\right) \right\vert ds \\
&\leq &K\left( T\right) \left( \tau _{n+1}-\tau _{n}\right)
^{2}\left( 1+\left\vert z_{n}\right\vert ^{2}\right) .
\end{eqnarray*}
Therefore
\begin{align}
& \left\vert \mathbb{E}\left( \left( z_{n+1}-z_{n}\right) \left(
z_{n+1}-z_{n}\right) ^{\top }\diagup \mathfrak{F}_{\tau _{n}}\right)
-\sum_{k=1}^{m}g^{k}\left( \tau _{n},z_{n}\right) (g^{k}\left( \tau
_{n},z_{n}\right) )^{\top }\left( \tau _{n+1}-\tau _{n}\right)
\right\vert
\label{eq:2.15} \\
& \leq K\left( T\right) \left( \tau _{n+1}-\tau _{n}\right)
^{2}\left( 1+\left\vert z_{n}\right\vert ^{2}\right) ,  \notag
\end{align}%
because
\begin{equation*}
\mathcal{L}_{n}\left( \tau _{n},z_{n}z_{n}^{\top }\right)
=z_{n}f\left( \tau _{n},z_{n}\right) ^{\top }+f\left( \tau
_{n},z_{n}\right) z_{n}^{\top }+\sum_{k=1}^{m}g^{k}\left( \tau
_{n},z_{n}\right) g^{k}\left( \tau _{n},z_{n}\right) ^{\top }.
\end{equation*}%
Combining (\ref{eq:2.12}) with (\ref{eq:2.15}) we get
(\ref{eq:2.10}).

A careful computation shows
\begin{eqnarray*}
\mathbb{E}\left( \chi _{n+1}^{\ell }\chi _{n+1}\chi _{n+1}^{\top
}\diagup \mathfrak{F}_{\tau _{n}}\right)  &=&f\left( \tau
_{n},z_{n}\right) ^{\ell }f\left( \tau _{n},z_{n}\right) f\left(
\tau _{n},z_{n}\right) ^{\top }\left( \tau _{n+1}-\tau _{n}\right)
^{3}+f\left( \tau _{n},z_{n}\right)
^{\ell }G_{n}G_{n}^{\top }\left( \tau _{n+1}-\tau _{n}\right) ^{2} \\
&&+f\left( \tau _{n},z_{n}\right) \left( G_{n}G_{n}^{\top }\right)
^{\ell ,\cdot }\left( \tau _{n+1}-\tau _{n}\right) ^{2}+\left(
G_{n}G_{n}^{\top }\right) ^{\cdot ,\ell }f\left( \tau
_{n},z_{n}\right) ^{\top }\left( \tau _{n+1}-\tau _{n}\right) ^{2},
\end{eqnarray*}%
where $G_{n}$ is the $\mathbb{R}^{d\times m}$-matrix whose $\left(
i,j\right) $-th element is the $i$-th entry of $g^{j}\left( \tau
_{n},z_{n}\right) $. Similarly,%
\begin{eqnarray*}
\mathbb{E}\left( \left( z_{n+1}-z_{n}\right) ^{\ell }\left(
z_{n+1}-z_{n}\right) \left( z_{n+1}-z_{n}\right) ^{\top }\diagup \mathfrak{F}%
_{\tau _{n}}\right)  &=&\left( \mu _{n}\left( \tau _{n+1}\right)
-z_{n}\right) ^{\ell }\widetilde{\sigma }_{n}\left( \tau _{n+1}\right)  \\
&&+\left( \mu _{n}\left( \tau _{n+1}\right) -z_{n}\right) \widetilde{\sigma }%
_{n}\left( \tau _{n+1}\right) ^{\ell ,\cdot } \\&&
+\widetilde{\sigma }_{n}\left( \tau _{n+1}\right) ^{\cdot ,\ell
}\left( \mu _{n}\left( \tau _{n+1}\right)
-z_{n}\right) ^{\top } \\
&&+\left( \mu _{n}\left( \tau _{n+1}\right) -z_{n}\right) ^{\ell
}\left( \mu _{n}\left( \tau _{n+1}\right) -z_{n}\right) \left( \mu
_{n}\left( \tau _{n+1}\right) -z_{n}\right) ^{\top }.
\end{eqnarray*}
The last two inequalities imply (\ref{eq:2.11}), which completes the
proof.
\end{proof}

\section{Numerical Simulations}

In this section, numerical simulations are presented in order to
illustrate the performance of Scheme \ref{scheme:MM1}. This involves
the numerical calculation of known expresions for functionals of two
SDEs: a bilinear equation with random oscillatory dynamics, and a
renowned nonlinear test equation. Pad\'{e} method with scaling and
squaring strategy (see, e.g., \cite{Moler03}) was used to compute
the exponential matrix in (\ref{eq:1.7}) and (\ref{eq:1.8}), whereas
the squared root of the matrix $\sigma _{n}\left( \tau _{n+1}\right)
-\mu _{n}\left( \tau _{n+1}\right) \mu _{n}^{\intercal }\left( \tau
_{n+1}\right) $ in (\ref{eq:1.6}) was computed by means of the
singular value decomposition (see, e.g., \cite{Golub 1996}). $\eta
_{n}^{k}$ in (\ref{eq:1.6}) was set as a two-point distributed
random variable with probability $P(\eta _{n}^{k}=\pm 1)=1/2$ for all $n=0,..,N-1$ and $%
k=1,..,m$. All simulations were carried out in Matlab2014a.

\begin{example}
\textbf{Bilinear SDE with random oscillatory dynamics.}
\begin{equation}
dX_{t}=\alpha \left[
\begin{array}{cc}
0 & 1 \\
-1 & 0%
\end{array}%
\right] X_{t}dt+\rho _{1}\left[
\begin{array}{cc}
0 & 1 \\
-1 & 0%
\end{array}%
\right] X_{t}dW_{t}^{1}+\rho _{2}\left[
\begin{array}{cc}
1 & 0 \\
0 & 1%
\end{array}%
\right] X_{t}dW_{t}^{2},  \label{Eq Ej1}
\end{equation}%
for all $t\in \lbrack 0,12.5625]$, initial condition $%
(X_{0}^{1},X_{0}^{2})=(1,2)$, and parameters $\alpha =10$, $\rho
_{1}=0.1$ and $\rho _{2}=2\rho _{1}$.
\end{example}

Since $\left[
\begin{array}{cc}
1 & 0 \\
0 & 1%
\end{array}%
\right] $ commutates with $\left[
\begin{array}{cc}
0 & 1 \\
-1 & 0%
\end{array}%
\right] $, the solution of (\ref{Eq Ej1}) is given by
\begin{equation}
X_{t}=\exp \left( \left[
\begin{array}{cc}
(\rho _{1}^{2}-\rho _{2}^{2})/2 & \alpha \\
-\alpha & (\rho _{1}^{2}-\rho _{2}^{2})/2%
\end{array}%
\right] t+\left[
\begin{array}{cc}
0 & \rho _{1} \\
-\rho _{1} & 0%
\end{array}%
\right] W_{t}^{1}+\left[
\begin{array}{cc}
\rho _{2} & 0 \\
0 & \rho _{2}%
\end{array}%
\right] W_{t}^{2}\right)  \label{Exact Sol Ej1}
\end{equation}%
(see, e.g., \cite{Arnold 1974}, p. 144). From Theorem 3 in
\cite{Jimenez 2012}, the mean $m_{t}$ and variance $v_{t}$ of
$X_{t}$ are given by the expresions
\begin{equation}
m_{t}=X_{0}+L_{2}\exp (Ht)u_{0}  \label{Ej1 media}
\end{equation}%
and%
\begin{equation}
vec(v_{t})=L_{1}\exp (Ht)u_{0}-vec(m_{t}m_{t}^{\intercal }),
\label{Ej1 Var}
\end{equation}%
where the matrices $L_{1}$, $L_{2}$, $H$ and the vector $u_{0}$ are
defined as
\begin{equation*}
H=\left[
\begin{array}{ccc}
A & 0 & 0 \\
0 & 0 & 0 \\
0 & 0 & C%
\end{array}%
\right] \in
\mathbb{R}
^{8\times 8},\text{ \ \ \ \ \ }u_{0}=\left[
\begin{array}{c}
vec(X_{0}X_{0}^{\intercal }) \\
1 \\
r%
\end{array}%
\right] \in
\mathbb{R}
^{8},
\end{equation*}%
\begin{equation*}
L_{1}=[%
\begin{array}{cc}
I_{4} & 0_{4}%
\end{array}%
]\in
\mathbb{R}
^{4\times 8}\text{ \ \ \ and \ \ \ \ \ }L_{2}=[%
\begin{array}{ccc}
0_{2\times 5} & I_{2} & 0_{2\times 1}%
\end{array}%
]\text{\ }\in
\mathbb{R}
^{2\times 8}
\end{equation*}%
with%
\begin{equation*}
A=\left[
\begin{array}{cccc}
\rho _{2}^{2} & \alpha & \alpha & \rho _{1}^{2} \\
-\alpha & \rho _{2}^{2} & -\rho _{1}^{2} & \alpha \\
-\alpha & -\rho _{1}^{2} & \rho _{2}^{2} & \alpha \\
\rho _{1}^{2} & -\alpha & -\alpha & \rho _{2}^{2}%
\end{array}%
\right] \in
\mathbb{R}
^{4\times 4},\text{ \ \ \ }C=\left[
\begin{array}{ccc}
0 & \alpha & \alpha X_{0}^{2} \\
-\alpha & 0 & -\alpha X_{0}^{1} \\
0 & 0 & 0%
\end{array}%
\right] \in
\mathbb{R}
^{3\times 3}\text{ \ \ \ and \ \ \ \ \ }r=\left[
\begin{array}{c}
0 \\
0 \\
1%
\end{array}%
\right] \in
\mathbb{R}
^{3}.
\end{equation*}

First, we compare the  exact values (\ref{Ej1 media})-(\ref{Ej1
Var}) for the mean and variance of $X_t$ with their estimates
obtained via Monte Carlo simulations. For this purpose, $M$
realizations $X_{\tau _{n}}^{\{i\}}$ of the exact solution and
$z_{n}^{\{i\}}$ of the Scheme \ref{scheme:MM1} were computed on an
uniform time partition $\tau _{n}=n\Delta$, with $\Delta =1/2^{6}$,
$n=0,..,N$, and $N=804$. Then, with the estimates
\begin{equation*}
\overline{m}_{\tau _{n}}=\frac{1}{M}\sum\limits_{i=1}^{M}X_{\tau
_{n}}^{\{i\}}\text{ \ \ \ \ \ \ \ and \ \ \ \ \ \ \ \ \
}\widehat{m}_{\tau
_{n}}=\frac{1}{M}\sum\limits_{i=1}^{M}z_{n}^{\{i\}}
\end{equation*}%
for the mean, and
\begin{equation*}
\overline{v}_{\tau _{n}}=\frac{1}{M}\sum\limits_{i=1}^{M}X_{\tau
_{n}}^{\{i\}}(X_{\tau _{n}}^{\{i\}})^{\intercal }-\overline{m}_{\tau _{n}}%
\overline{m}_{\tau _{n}}^{\intercal }\text{ \ \ \ \ \ \ \ and \ \ \
\ \ \ \
\ \ }\widehat{v}_{\tau _{n}}=\frac{1}{M}\sum\limits_{i=1}^{M}z_{n}^{\{i%
\}}(z_{n}^{\{i\}})^{\intercal }-\widehat{m}_{\tau
_{n}}\widehat{m}_{\tau _{n}}^{\intercal }
\end{equation*}%
for the variance, the errors
\begin{equation*}
\overline{e}_{\tau _{n}}^{[1]}=\left\vert m_{\tau _{n}}^{1}-\overline{m}%
_{\tau _{n}}^{1}\right\vert \text{ \ \ \ \ and \ \ \ \ \
}\widehat{e}_{\tau _{n}}^{[1]}=\left\vert m_{\tau
_{n}}^{1}-\widehat{m}_{\tau _{n}}^{1}\right\vert
\end{equation*}%
\begin{equation*}
\overline{e}_{\tau _{n}}^{[2]}=\left\vert m_{\tau _{n}}^{2}-\overline{m}%
_{\tau _{n}}^{2}\right\vert \text{ \ \ \ \ and \ \ \ \ \
}\widehat{e}_{\tau _{n}}^{[2]}=\left\vert m_{\tau
_{n}}^{2}-\widehat{m}_{\tau _{n}}^{2}\right\vert
\end{equation*}%
\begin{equation*}
\overline{e}_{\tau _{n}}^{[3]}=\left\vert v_{\tau _{n}}^{1,1}-\overline{v}%
_{\tau _{n}}^{1,1}\right\vert \text{ \ \ \ \ and \ \ \ \ \ }\widehat{e}%
_{\tau _{n}}^{[3]}=\left\vert v_{\tau _{n}}^{1,1}-\widehat{v}_{\tau
_{n}}^{1,1}\right\vert
\end{equation*}%
\begin{equation*}
\overline{e}_{\tau _{n}}^{[4]}=\left\vert v_{\tau _{n}}^{2,2}-\overline{v}%
_{\tau _{n}}^{2,2}\right\vert \text{ \ \ \ \ and \ \ \ \ \ }\widehat{e}%
_{\tau _{n}}^{[4]}=\left\vert v_{\tau _{n}}^{2,2}-\widehat{v}_{\tau
_{n}}^{2,2}\right\vert
\end{equation*}%
\begin{equation*}
\overline{e}_{\tau _{n}}^{[5]}=\left\vert v_{\tau _{n}}^{1,2}-\overline{v}%
_{\tau _{n}}^{1,2}\right\vert \text{ \ \ \ \ and \ \ \ \ \ }\widehat{e}%
_{\tau _{n}}^{[5]}=\left\vert v_{\tau _{n}}^{1,2}-\widehat{v}_{\tau
_{n}}^{1,2}\right\vert
\end{equation*}%
were evaluated. Here, for computing $X_{\tau _{n}}^{\{i\}}$, the
realization of the Wiener process $(W_{\tau _{n}}^{1},W_{\tau
_{n}}^{2})$ was simulated
as $W_{\tau _{n}}^{k}=\sum\limits_{j=1}^{n}\Delta W_{\tau _{j}}^{k}$ and $%
\Delta W_{\tau _{j}}^{k}\sim \sqrt{\Delta }\mathcal{N(}0,1)$ for each $k=1,2$%
, where $\mathcal{N(}0,1)$ is a Gaussian random variable with zero
mean and variance 1.

Figure 1 shows the exact values of $m_{\tau _{n}},v_{\tau _{n}}$
versus their approximations $\widehat{m}_{\tau
_{n}},\widehat{v}_{\tau _{n}}$ obtained from $M=2^{16}$ simulations
of Scheme \ref{scheme:MM1}. Observe that there is not visual
difference among these values. Table \ref{TableI} presents the
errors $\widehat{e}^{[l]}=\underset{n}{\max }\{\widehat{e} _{\tau
_{n}}^{[l]}\}$ and \ $\overline{e}^{[l]}=\underset{n}{\max }\{
\overline{e}_{\tau _{n}}^{[l]}\}$ of the estimated value of the mean
and variance of (\ref{Eq Ej1}) computed with different number of
simulations $M$. As it was expected, these errors decrease as the
number of simulations $M$ increases. It is well known that the error
$e$ of the sampling mean of the Monte Carlo method decrease with the
inverse of the square root of the number of simulations
\cite{Kloeden 1995}, i.e.,
\begin{equation*}
e\propto \frac{1}{M^{\gamma }}
\end{equation*}%
with $\gamma =0.5$. A roughly estimator $\gamma _{\tau _{n}}^{[l]}$ of $%
\gamma $ for the errors $\widehat{e}_{\tau _{n}}^{[l]}$ and $\overline{e}%
_{\tau _{n}}^{[l]}$ was computed as minus the slope of the straight
line fitted to the set of six points $\left\{ (\log
_{2}(M_{k}),\,\log _{2}(e_{\tau
_{n}}^{[l]}(M_{k}))):\;M_{k}=2^{k},k=8,10,12,14,16,18\right\} $.
Table \ref{TableII} shows the average \ \ \ \ \ \ \ \ \
\begin{equation*}
\widetilde{\gamma }^{[l]}=\frac{1}{N}\sum\limits_{n=1}^{N}\gamma
_{\tau _{n}}^{[l]}
\end{equation*}%
for each type of error and its corresponding standard deviation
\begin{equation*}
s^{[l]}=\sqrt{\frac{1}{N-1}\sum\limits_{n=1}^{N}(\gamma _{\tau
_{n}}^{[l]}-\gamma ^{\lbrack l]})^{2}}.
\end{equation*}

\begin{figure}[h]
\centering
\includegraphics[width=6.5in]{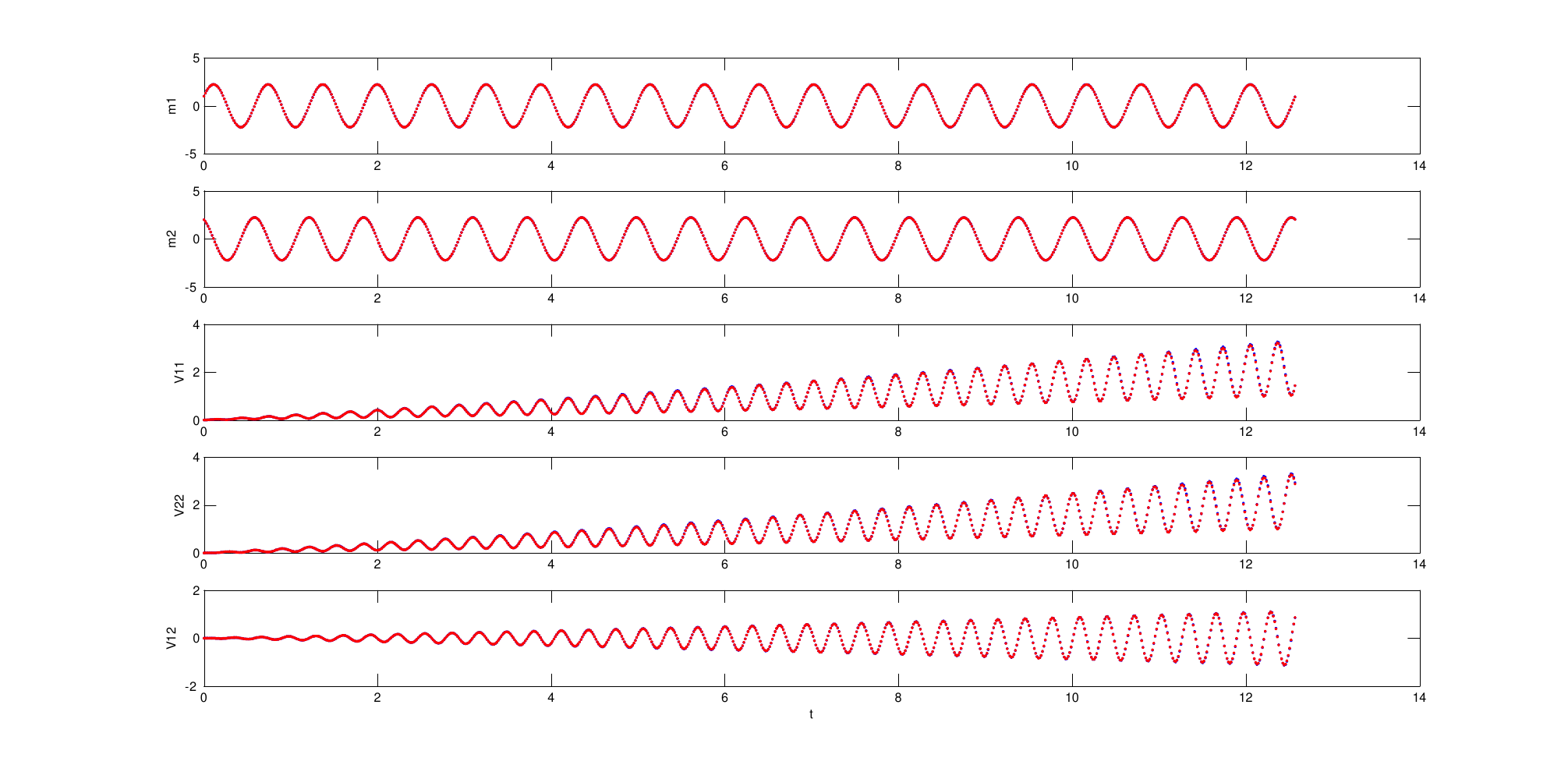}
\caption{Integration of Example 1. Exact values of
$m_{t_{n}},v_{t_{n}}$ and their approximations
$\widehat{m}_{t_{n}},\widehat{v}_{t_{n}}$ computed via
Monte Carlos with $M=2^{16}$ realizations of the Scheme \protect\ref%
{scheme:MM1}.} \label{Fig1}
\end{figure}

Results of Tables \ref{TableI} and \ref{TableII}, together with
Figure 1, indicate that the estimators for the mean and variance of
(\ref{Eq Ej1}) obtained by means of the simulations of the exact
solution (\ref{Exact Sol Ej1}) and Scheme \ref{scheme:MM1} are quite
similar. This is an expected result since the first two moments of
the linear SDEs and Scheme 1 are "equal" (up to the precision of the
floating-point arithmetic in the numerical computation of the
involved exponential and square root matrices).

\begin{table}
\begin{center}
\begin{tabular}{c||cccccc}
/$M$ & $2^{8}$ & $2^{10}$ & $2^{12}$ & $2^{14}$ & $2^{16}$ & $2^{18}$ \\
\hline\hline $\widehat{e}^{[1]}$ & 0.10710 & 0.05228 & 0.04536 &
0.01508 & 0.00686 &
0.00275 \\
$\widehat{e}^{[2]}$ & 0.10643 & 0.05025 & 0.04469 & 0.01433 &
0.00647 &
0.00304 \\
$\widehat{e}^{[3]}$ & 0.43411 & 0.25916 & 0.18319 & 0.29184 &
0.07244 &
0.03181 \\
$\widehat{e}^{[4]}$ & 0.39102 & 0.29529 & 0.21413 & 0.29496 &
0.07726 &
0.02753 \\
$\widehat{e}^{[5]}$ & 0.23859 & 0.14325 & 0.15463 & 0.16961 &
0.05187 & 0.02450 \\ \hline $\overline{e}^{[1]}$ & 0.27037 & 0.02964
& 0.02101 & 0.02108 & 0.01487 &
0.00376 \\
$\overline{e}^{[2]}$ & 0.27626 & 0.04147 & 0.02327 & 0.02227 &
0.01452 &
0.00347 \\
$\overline{e}^{[3]}$ & 0.92465 & 0.35064 & 0.18339 & 0.15513 &
0.06024 &
0.02482 \\
$\overline{e}^{[4]}$ & 0.89503 & 0.39518 & 0.17646 & 0.14678 &
0.05655 &
0.02346 \\
$\overline{e}^{[5]}$ & 0.36642 & 0.24892 & 0.10664 & 0.08899 &
0.02785 &
0.01101%
\end{tabular}
\end{center}
\caption{{\small Values of the errors }$\widehat{e}^{[l]}${\small \ and }$%
\overline{e}^{[l]}${\small \ versus number of simulations
}$M${\small \ in the Example 1.}} \label{TableI}
\end{table}

\begin{table}
\begin{center}
\begin{tabular}{c||ccccc|ccccc}
& $\widehat{e}^{[1]}$ & $\widehat{e}^{[2]}$ & $\widehat{e}^{[3]}$ & $%
\widehat{e}^{[4]}$ & $\widehat{e}^{[5]}$ & $\overline{e}^{[1]}$ & $\overline{%
e}^{[2]}$ & $\overline{e}^{[3]}$ & $\overline{e}^{[4]}$ & $\overline{e}%
^{[5]} $ \\ \hline\hline $\widetilde{\gamma }$ & 0.52 & 0.53 & 0.44
& 0.44 & 0.41 & 0.44 & 0.44 & 0.44
& 0.43 & 0.45 \\
$std$ & 0.16 & 0.16 & 0.20 & 0.20 & 0.21 & 0.18 & 0.19 & 0.21 & 0.21 & 0.20%
\end{tabular}
\end{center}
\caption{{\small Average }$\widetilde{\gamma }${\small \ and
standard deviation }$std${\small \ of the estimators for the rate of
convergency }$ \gamma =1/2${\small \ of the Monte Carlo simulations
in\ the Example 1.}} \label{TableII}
\end{table}

\begin{table}
\begin{center}
\begin{tabular}{c||cccccc}
/$M$ & $2^{8}$ & $2^{10}$ & $2^{12}$ & $2^{14}$ & $2^{16}$ & $2^{18}$ \\
\hline\hline
$r^{[1]}$ & 0.0522 & 0.0177 & 0.0105 & 0.0037 & 0.0016 & 0.0010 \\
$r^{[2]}$ & 0.0534 & 0.0159 & 0.0106 & 0.0037 & 0.0014 & 0.0010%
\end{tabular}
\end{center}
\caption{{\small Relative error }$r^{[l]}$ {\small in the
computation of the functionals }$\overline{h}_{\tau
_{n}}^{[l]}${\small \ and }$\widehat{h} _{\tau _{n}}^{[l]}${\small \
with different number of simulations }${\small M }${\small \ in the
Example 1.}} \label{TableIII}
\end{table}

In addition, let us compute the relative difference
\begin{equation*}
r^{[l]} \left( M \right) = \max_n \left\{ \left\vert
(\overline{h}_{\tau _{n}}^{[l]}-\widehat{h}_{\tau
_{n}}^{[l]})/\overline{h}_{\tau_{n}}^{[l]}\right\vert \right\}
\end{equation*}
between the approximations
\begin{equation*}
\overline{h}_{\tau
_{n}}^{[l]}=\frac{1}{M}\sum\limits_{i=1}^{M}\arctan
\left(1+ \left( \left( X_{\tau _{n}}^{l} \right)^{\{i\}} \right)^{2} \right)\text{ \ \ \ \ \ \ \ and \ \ \ \ \ \ \ }%
\widehat{h}_{\tau_{n}}^{[l]}=\frac{1}{M}\sum\limits_{i=1}^{M}
\arctan \left(1+\left( \left(z_{n}^{l} \right)^{\{i\}}
\right)^{2}\right)
\end{equation*}
of the nonlinear functionals $h_{\tau _{n}}^{[l]}= \mathbb{E}
\left(\arctan \left(1+(X_{\tau _{n}}^{l})^{2} \right) \right)$,
with  $l=1,2$. Table \ref{TableIII} displays the values of $%
r^{[l]}$ for different values of $M$. As it was also expected, $%
r^{[l]}$ goes to zero as the number of simulations $M$ increases.
Furthermore, Table \ref{TableIII} shows that there is no significant
difference between the estimates obtained from sampling the exact
solution $X_{\tau _{n}}$ and Scheme \ref{scheme:MM1}, even though $
\mathbb{E} \left(\arctan \left(1+(X_{\tau _{n}}^{l})^{2} \right)
\right)$ involves the computation of high order moments of $X_{\tau
_{n}}$.

The above simulation results illustrate the feasibility of Scheme
\ref{scheme:MM1} for approximating functionals of linear SDEs with
multiplicative noise.  At this point is worth to mention that, with
the uniform time partition consider here, the Euler scheme leads
divergent results or computer overflows in the integration of the
equation (\ref{Eq Ej1}).

\begin{example}
\textbf{Nonautonomous nonlinear SDE }\cite{Talay90b}.%
\begin{equation}
d\left[
\begin{array}{c}
X_{t}^{1} \\
X_{t}^{2}%
\end{array}%
\right] =\left[
\begin{array}{c}
-X_{t}^{2} \\
X_{t}^{1}%
\end{array}%
\right] dt+\left[
\begin{array}{c}
0 \\
\frac{\sin (X_{t}^{1}+X_{t}^{2})}{\sqrt{1+t}}%
\end{array}%
\right] dW_{t}^{1}+\left[
\begin{array}{c}
\frac{\cos (X_{t}^{1}+X_{t}^{2})}{\sqrt{1+t}} \\
0%
\end{array}%
\right] dW_{t}^{2},  \label{Eq Ej2}
\end{equation}%
with initial condition $(X_{0}^{1},X_{0}^{2})=(1,1)$ and $t\in \lbrack 0,10]$%
. For this equation, $E(\phi (X_{t}))=\left\vert
X_{t_{0}}\right\vert ^{2}+\log (1+t)$, with $\phi (X)=\left\vert
X\right\vert ^{2}$.
\end{example}

\begin{figure}[h]
\centering
\includegraphics[width=6.5in]{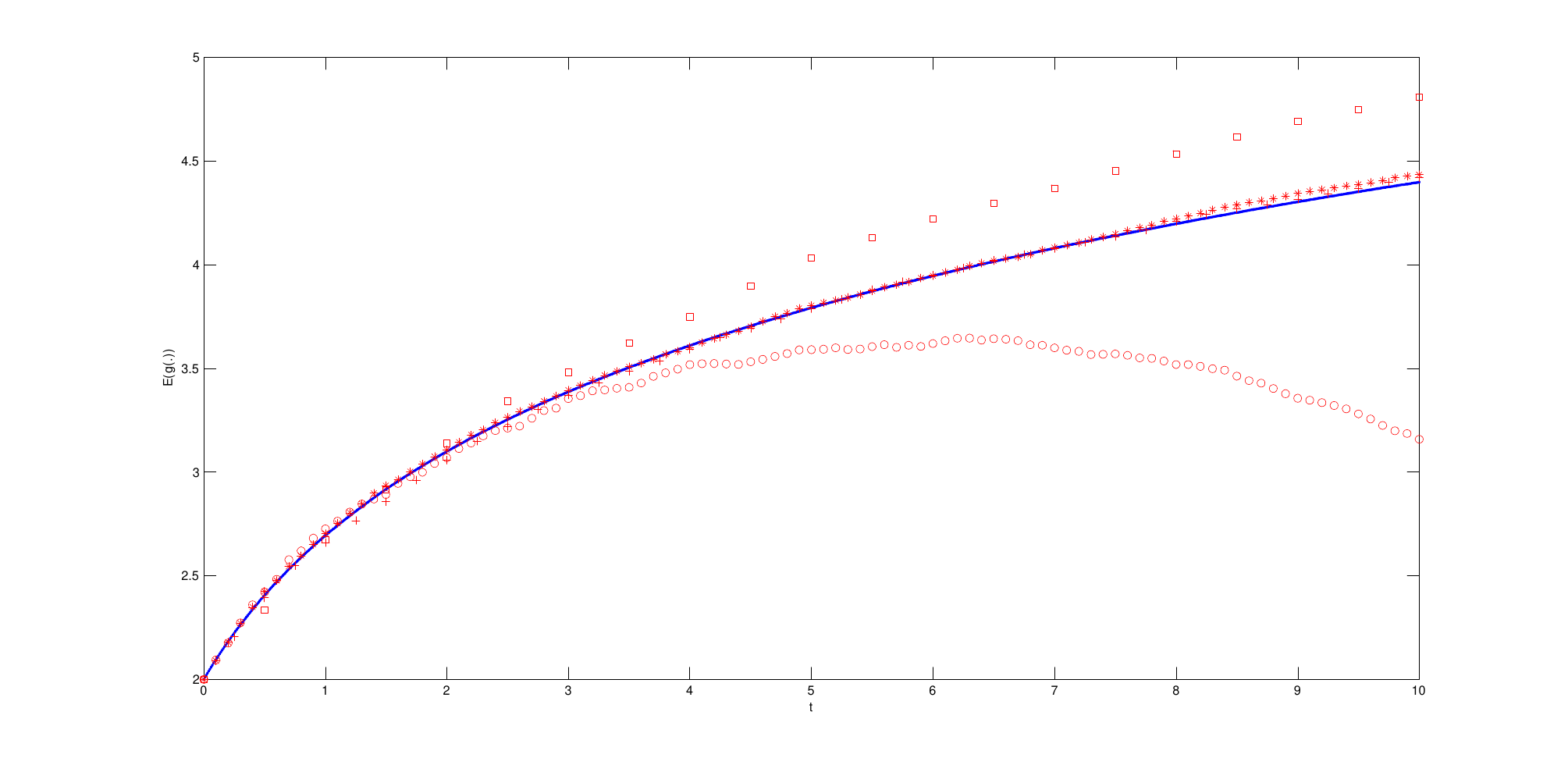}
\caption{Integration of Example 2. Exact Value: solid line. Scheme
\protect
\ref{scheme:MM1}: $\square $ with $\Delta =0.5$, $+$ with $\Delta =0.25$, $%
\ast $ with $\Delta =0.1$. Euler with Romberg extrapolation: $o$ with $%
\Delta =0.05$ and $\Delta =0.1$} \label{Fig2}
\end{figure}

It is well-known from \cite{Talay90b} that via Monte Carlo
simulations: 1) both, the Euler and the Milstein schemes with fixed
stepsize $\Delta =0.01$ fail to approximate $E(\phi (X_{t}))$; and
2) the second order method arising from Romberg's extrapolation of
the Euler scheme with stepsizes $0.02 $ and $0.01$ gives a
satisfactory approximation to $E\left( \phi \left( X_{t}\right)
\right) $, but fails when the stepsizes are $0.05$ and $0.1$.
Similarly to the fourth figure in \cite{Talay90b}, Figure 2
illustrates this last result for a Monte Carlo estimation with
$M=10000$ simulations.

 Figure 2 also shows the computation of
$E(\phi (X_{t}))$ via Monte Carlo method and Scheme
\ref{scheme:MM1}, but on uniform time partitions with stepsizes
$\Delta =0.5,0.25,0.1$ and $M=10000$ simulations. In addition, Table
\ref{TableIV} provides the estimates $\widehat{e}$ of the mean
errors $e=E(\phi (z_{N}))-E(\phi (X_{T}))$ resulting from the
integration of (\ref{Eq Ej2}) via Scheme \ref{scheme:MM1} with
different stepsizes. For this, the simulated trajectories
$z_{N}^{\{i,j\}}$, $i=1,...,K $ and $j=1,...,M$, were are arranged
into $K=100$ batches of $M=10000$ trajectories each for computing
\begin{equation*}
\text{\
}\widehat{e}=\frac{1}{K}\sum\limits_{j=1}^{K}\widehat{e}_{j}\text{ \
\ \ \ \ \ with \ \ \ \ \ }\widehat{e}_{j}=\frac{1}{M}\sum\limits_{i=1}^{M}%
\phi \left( z_{N}^{\{i,j\}}\right) -E\left( \phi \left( X_{T}\right)
\right) .\text{ }
\end{equation*}%
The $90\%=100(1-\alpha )\%$ confidence interval of the Student's $t$
distribution with $K-1$ degrees for the mean error is given by
\begin{equation*}
\lbrack \widehat{e}-\Delta \widehat{e},\widehat{e}+\Delta
\widehat{e}],
\end{equation*}%
where
\begin{equation*}
\Delta \widehat{e}=t_{1-\alpha ,K-1}\sqrt{\frac{\widehat{\sigma }_{e}^{2}}{K}%
},\text{ \ \ \ \ with\ \ \ \ \ \ \ \ \ }\widehat{\sigma }_{e}^{2}=\frac{1}{%
K-1}\sum\limits_{j=1}^{K}(e_{j}-\widehat{e})^{2}.
\end{equation*}%
For comparison, the same estimate of the mean error for Euler scheme
is also given in Table \ref{TableIV}. This illustrates again the
better performance of the Scheme \ref{scheme:MM1} introduced in this
paper.\newline

\begin{table}[bt]
\begin{center}
{\footnotesize
\begin{tabular}{|c||c|c|c|c|}
\hline $\widehat{e}/\Delta $ & $1$ & $0.5$ & $0.25$ & $0.1$ \\
\hline\hline Scheme \ref{scheme:MM1} &
\multicolumn{1}{|r|}{$-2.2360\pm 0.0093$} &
\multicolumn{1}{|r|}{$-0.4512\pm 0.0067$} &
\multicolumn{1}{|r|}{$-0.0868\pm 0.0054$} &
\multicolumn{1}{|r|}{$0.0076\pm 0.0053$} \\ \hline
Euler & \multicolumn{1}{|r|}{$-2435.8\pm 1.7826$} & \multicolumn{1}{|r|}{$%
-235.05\pm 0.2192$} & \multicolumn{1}{|r|}{$-32.031\pm 0.0361$} &
\multicolumn{1}{|r|}{$-5.7704\pm 0.0101$} \\ \hline
\end{tabular}
}
\end{center}
\caption{{\protect\small Estimate }$\widehat{e}${\protect\small \ of
the mean error }$E(\protect\phi (z_{N}))-E(\protect\phi
(X_{T}))${\protect\small \ in the integration of (\protect\ref{Eq
Ej2}) by means of Scheme \protect \ref{scheme:MM1} and the Euler
scheme for different integration stepsizes }$ \Delta $. }
\label{TableIV}
\end{table}

\section{Conclusions}

A weak Local Linearization scheme for stochastic differential
equations with multiplicative noise was introduced. The scheme
preserves the first two moments of the solution of linear SDEs and
the mean square stability that such solution may have. The order-1
of weak convergence was proved and the practical performance of the
scheme in the evaluation of functionals of linear and nonlinear SDEs
was illustrated with numerical simulations. The simulations also
showed the significant higher accuracy of the introduced scheme in
comparison with the Euler scheme. \newline

\textbf{Acknowledgement.} This work was partially supported by
FONDECYT Grant 1140411. CMM was also partially supported by BASAL
Grant  PFB-03.

\end{document}